\numberwithin{equation}{section}
\def\C{\mathbb C}
\def\N{\mathbb N}
\def\T{\mathbb T}
\def\Z{\mathbb Z}
\def\id{\operatorname{id}}
\def\pol{\mathcal{O}}
\def\lra{\longrightarrow}
\def\lra{\longrightarrow}
\def\sw#1{{\sb{(#1)}}}
\def\ot{\mathop{\otimes}}
\def\eps{\varepsilon}
\def\can{\mathsf{can}}
\def\k{\mathbb{K}}
\def\coC{{\mathrm{co}\,C}}
\def\coD{{\mathrm{co}\,D}}
\def\can{\mathrm{can}}
\def\lto{\longmapsto}
\newtheorem{theo}{Theorem}[section]
\newtheorem{lemm}[theo]{Lemma}
\newtheorem{prop}[theo]{Proposition}
\theoremstyle{definition}
\newtheorem{defi}[theo]{Definition}
\newtheorem{rema}[theo]{Remark}
\newcounter{zlist}
\newenvironment{zlist}{\begin{list}{(\arabic{zlist})}{
\usecounter{zlist}\leftmargin2.5em\labelwidth2em\labelsep0.5em
\topsep0.6ex
\parsep0.3ex plus0.2ex minus0.1ex}}{\end{list}}
\newcounter{blist}
\newenvironment{blist}{\begin{list}{(\alph{blist})}{
\usecounter{blist}\leftmargin2.5em\labelwidth2em\labelsep0.5em
\topsep0.6ex
\parsep0.3ex plus0.2ex minus0.1ex}}{\end{list}}
\newcounter{rlist}
\title[Noncommutative homogeneous bundles]{An algebraic framework for noncommutative bundles with homogeneous fibres}
 \author{Tomasz Brzezi\'nski}
 \address{ Department of Mathematics, Swansea University, 
Swansea University Bay Campus,
Fabian Way,
Swansea,
  Swansea SA1 8EN, U.K.\ 
\& Department of Mathematics, University of Bia\l ystok, K.\ Cio\l kowskiego 1M, 15--245 Bia\l ystok, Poland} 
  \email{T.Brzezinski@swansea.ac.uk}   
\author{Wojciech Szyma\'nski}
\address{Department of Mathematics and Computer Science, University of Southern Denmark, 
Campusvej 55, 5230 Odense M, Denmark} 
\email{szymanski@imada.sdu.dk} 
\thanks{The second named author was  supported by  the 
DFF-Research Project 2, `Automorphisms and invariants of operator algebras', Nr. 7014--00145B, 2017--2021. }
\date\today
\begin{document}

\begin{abstract}
An algebraic framework for noncommutative bundles with (quantum) homogeneous fibres is proposed.  The framework relies on the use of principal coalgebra extensions which play the role of principal bundles  in noncommutative geometry which might be additionally equipped with a Hopf algebra symmetry. The proposed framework is supported by two examples of noncommutative $\mathbb{C} P_q^1$-bundles: the quantum flag manifold viewed as a bundle with a generic Podle\'s sphere as a fibre, and the quantum twistor bundle viewed as a bundle over the quantum 4-sphere of Bonechi, Ciccoli and Tarlini.
\end{abstract}

\maketitle

\section{Introduction}
Principal bundles in noncommutative geometry or fibre bundles with quantum group  fibres have been well understood since at least the 1990s. Starting with the pioneering work of Schneider \cite{Sch:pri}, through more geometric approach in \cite{brzma}, \cite{h} to example-driven generalisations \cite{BrzMaj:coa}, \cite{BrzMaj:geo}, the algebraic notion of a noncommutative principal bundle has been formalised as a {\em principal coalgebra extension}; see e.g.\ \cite{brzha0}. The aim of this paper is to drop some of the symmetry of a fibre and develop an algebraic framework for noncommutative bundles with quantum homogeneous fibres.  

In a recent work, the authors presented an interpretation of the quantum flag manifold as the total space of a noncommutative bundle over the quantum projective plane $\C P_q^2$ with the homogeneous fibre $\C P_q^1$, see \cite{BrzSzy:fla} and \cite{BrzSz:BiaProc}.  In here, rather than focussing on a particular example, we develop an algebraic framework, which not only captures the quantum flag manifold example, but it is also applicable to much more general situations. The key idea is that in order to compensate for the lack of the (quantum) group structure of the fibre, dually encoded in the comultiplication, one needs to keep a principal bundle in the background, whose fibres act transitively on the homogeneous fibres of the constructed bundles. We illustrate this framework by two examples. The first example is the quantum flag manifold  now fibered by generic Podle\'s spheres \cite{Pod:sph} or two-parameter projective lines  $\C P_{q,s}^1$, thus generalising the algebraic part of  \cite{BrzSzy:fla}, where the standard Podle\'s sphere was considered, but not yet fully exploring the framework developed in the present article. The second example, which now explores the full generality of our set-up, is  the quantum twistor bundle, i.e.\ the $\C P_q^1$-fibration of the quantum projective space $\C P_q^3$ with the quantum 4-sphere of Bonechi-Ciccoli-Tarlini \cite{BCT} as the base. The topological aspects of this example are studied elsewhere, see \cite{MSz}.

In the algebraic language suggested by the Gelfand duality, a fibration of a space corresponds to an inclusion of algebras. In the noncommutative world, 
however, one is immediately faced with formidable difficulties when trying to interpret algebraically classical notions of fibre and of local triviality, since both  
require  references to points in the space. Several approaches to these problems have been proposed in the literature, 
just to mention the concept of local triviality from \cite{hmsz} or piecewise triviality from \cite{hkmz}. Nevertheless, we are still some way from 
resolving these issues in a completely satisfactory fashion. Furthermore, an attempt to understand noncommutative fibre bundles in the spirit of Steenrod, 
\cite{st}, would require building structural symmetries from the fibration. This however does not seem possible for noncommutative spaces in view 
of lack of transition functions. In the present paper, we propose a way out of this predicament by 
placing emphasis on the cotensor product decomposition of the ambient algebras,  
see property (1) in Theorem \ref{thm.main} and property (2) in Theorem \ref{thm.main.DK}. In this way we can interpret the  
algebra of the (noncommutative) total space of the fibration as the algebra of sections of a bundle associated to a principal bundle. This combined 
with projectivity of this algebra over the  algebra of the base space, provides an adequate replacement for the local triviality condition. At the 
same time, it gives an algebraic way of recovering the fibre. 

The paper is organised as follows. In Section~\ref{sec.results} we present the set-up and state main results of the paper. Section~\ref{sec.ex} describes two examples of noncommutative bundles with homogeneous fibres mentioned above. In the final Section~\ref{sec.proofs} we give all the technical details of the proofs of main Theorems~\ref{thm.main} and \ref{thm.main.DK}.

\section{The framework: results}\label{sec.results}
In this section we present the algebraic set-up for noncommutative bundles and state the main results. We work over a field $\k$ and by an algebra (resp.\ coalgebra) we mean a unital associative algebra over $\k$ (resp.\ countial coassociative coalgebra over $\k$). Unadorned tensor product symbol $\otimes$ denotes the tensor product of $\k$-vector spaces. The identity in any algebra is denoted by 1. For any coalgebra $C$ the comultiplication is denoted by $\Delta$ and counit by $\eps$ (or by $\Delta_C$, $\eps_C$ if we want to stress the particular coalgebra, e.g.\ when more than one coalgebra appear in the same formula). The action of an algebra on a module different from an algebra multiplication is denoted by a dot in-between the elements. If $A$ and $B$ are algebras and $M$ and $N$ are right $A$- and $B$-modules respectively, then
$$
(m\ot n)\cdot (a\ot b) := m\cdot a \ot n\cdot b, \qquad \mbox{for all $a\in A$, $b\in B$, $m\in M$, $n\in N$};
$$
this is simply the formula for the action of the tensor product of algebras on the tensor product of their modules.

\begin{defi}\label{def.coal.Gal}
Let $C$ be a coalgebra and $P$ be an algebra and a right $C$-comodule with coaction $\varrho: P\to P\otimes C$. The subalgebra of {\em coinvariants} $P^\coC$ is defined as
$$
P^\coC:= \{b\in P\;|\; \forall p\in P\; \varrho(bp) = b\varrho(p)\},
$$
where the left $P$-action on $P\ot C$ is defined by $q(p\ot c) = qp\ot c$.  Set $B= P^\coC$. The extension of algebras $B\subseteq P$ is said to be {\em coalgebra-Galois} or  {\em $C$-Galois} if the map
$$
\can : P\ot_B P\lra P\ot C, \qquad p\ot q \lto p\varrho(q),
$$
known as the {\em canonical Galois map}, is bijective. 

A $C$-Galois extension $B\subseteq P$ is said to be {\em copointed} (or $e$-{\em copointed}) if 
$$
\varrho(1) = 1\otimes e,
$$
for a (necessarily) group-like element $e\in C$. \hfill$\Box$
\end{defi}

The notion described in Definition~\ref{def.coal.Gal} has been introduced first in \cite{BrzMaj:coa} and then, in the stated generality, in \cite{BrzHaj:coa}. A few comments are now in order. First, note that  the coinvariants $B$ form a subalgebra  and that, by definition, the coaction $\varrho$ is a left $B$-module homomorphism, so that the  canonical  Galois map is well-defined. Second, the map $\can$ is both left $P$-linear and right $C$-colinear provided that its domain and codomain are equipped with the natural (obvious) left $P$-module structures given by the multiplication in $P$, and with the right $C$-comodule structures $\id\ot_B\varrho$ and $\id\ot \Delta$. Third, in the copointed case one can  show  
 that the coinvariant subalgebra coincides with the space of {\em $e$-coinvariants}, i.e.\ $B = P^\coC_e$, where
\begin{equation}\label{e-coinv}
P^\coC_e:=\{ b\in P\;|\; \varrho(b) = b\ot e\}.
\end{equation}
Four, as observed in \cite{BrzHaj:coa}, a coalgebra-Galois extension has an additional symmetry arising from the {\em canonical entwining map}:
\begin{equation}\label{can.entw}
\psi: C\otimes P\lto P\otimes C, \qquad c\otimes p\mapsto \can\left(\can^{-1}\left(1\otimes c\right)p\right).
\end{equation}
The properties of $\psi$ are not essential for the formulation of the main statements of this paper but only for their proofs, hence we postpone exploring them to Section~\ref{sec.proofs}. Here we only indicate that $\psi$ could be understood as a device which records the transfer of the right $P$-module structure on $P\ot_BP$ to $P\ot C$ in such a way that the canonical Galois map is a homomorphism of $P$-bimodules. Explicitly, since the map $\can$ is an isomorphism of left $P$-modules we can define the right $P$-action on $P\ot C$ by
\begin{equation}\label{right.pc}
(p\ot c)\cdot q := \can\left(\can^{-1}\left(p\ot c\right)q\right) = p\psi(c\ot q), \quad \mbox{for all $c\in C$, $p,q\in P$},
\end{equation}
where the last equality follows by the left $P$-linearity of the canonical map (and its inverse) and \eqref{can.entw}.
We note  further that in the $e$-copointed case,
\begin{equation}\label{coact.entw}
\varrho(p) = \psi(e\ot p), \qquad \mbox{for all $p\in P$},
\end{equation}
since 
$$
\psi(e\ot p) = \can\left(\can^{-1}\left(1\otimes e\right)p\right) = \can(1\ot p) =\varrho(p).
$$
If $\psi$ is bijective, then $P$ is a left $C$-comodule, with the coaction $\lambda: P\to C\ot P$, which in the $e$-copointed case reads:
\begin{equation}\label{left.coact}
\lambda(p) = \psi^{-1}(p\ot e), \qquad \mbox{for all $p\in P$}.
\end{equation}
In that case $e$-coinvariants of the right $C$-coaction $\varrho$ coincide with $e$-coinvariants of the left $C$-coaction $\lambda$ \eqref{left.coact}, the latter defined symmetrically to \eqref{e-coinv}, 
\begin{equation}\label{e-coinv-left}
{}^\coC P_e:=\{ b\in P\;|\; \lambda(b) = e\ot b\}.
\end{equation}

The following definition, first stated in \cite{BrzHaj:Che}, captures most of the characteristics which could be expected of the 
noncommutative generalisations of a principal bundle.
\begin{defi}\label{def.princ}
An $e$-pointed $C$-Galois extension $B\subseteq P$ is called {\em principal coalgebra extension} provided the canonical entwining map \eqref{can.entw} is bijective and $P$ is a $C$-equivariantly projective left $B$-module, i.e.\ there exists a left $B$-module and right $C$-comodule splitting of the (restriction of the) multiplication map $B\ot P\to P$. \hfill$\Box$
\end{defi}

One should note that the notion of a principal coalgebra extension is left-right symmetric; although one initially starts with a right coaction and derives the left one, one can equally well start with the left coaction and derive the right one. Right coalgebra extension is principal if and only if the derived left coalgebra extension is principal and vice versa. Important consequences of Definition~\ref{def.princ} include that $P$ is projective both as a left and right $B$-module (but typically not as a $B$-bimodule), and it is also faithfully flat as a left and right $B$-module.  From the geometric point of view $P$ can be understood as the algebra of functions on the total space of a principal fibre bundle with the base, whose algebra of functions is $B$ and with the structure group whose product and identity  are encoded in the coproduct and counit of $C$.

In this paper we are interested in developing a framework for dealing with bundles with homogeneous spaces as fibres. Thus in addition to the data in Definition~\ref{def.princ} we consider a coalgebra $D$ and a coalgebra morphism 
\begin{equation}\label{pi} 
\pi: C\to D.
\end{equation}
Since $e\in C$ is a group-like element, $\bar{e}:=\pi(e)$ is a group-like element of $D$. The coaction $\varrho$ of $C$ on $P$ can be pushed to the coaction $\bar\varrho$ of $D$ on $P$,
\begin{equation}\label{coact.D}
\bar\varrho: P\lto P\ot D, \qquad \bar\varrho = (\id\ot \pi)\circ \varrho,
\end{equation}
and then one may consider the $\bar{e}$-coinvariants,
\begin{equation}\label{coinv.be}
A := P^\coD_{\bar{e}} = \{a\in P\; |\; \bar\varrho(a) = a\ot \bar{e}\}.
\end{equation}
In the set-up of Definition~\ref{def.princ}, $A$ is a left $B$-submodule  of $P$ that contains $B$. The $B$-module $A$ plays the role of the total space of the  bundle with homogeneous fibre. The latter is represented by the space:
\begin{equation}\label{x}
X:= C^\coD_{\bar{e}} = \{x\in C\; |\; (\id\ot \pi)\circ \Delta(x) = x\ot \bar{e}\}.
\end{equation}
Admittedly, $X$ is not an algebra (hence, in this generality, it can hardly be interpreted as functions on some geometric object), but it is a left $C$ coideal, i.e. $\Delta(X)\subseteq C\ot X$, which reflects the homogeneity of the underlying object. This, in particular, allows us to consider the cotensor products
\begin{equation}\label{coten.pcx}
P\,\Box_C X := \left\{\sum_i p_i\ot x_i\in P\ot X\; |\; \sum_i \varrho(p_i)\ot x_i = \sum_i p_i\ot \Delta(x_i)\right\}, \text{and} 
\end{equation}
\begin{equation}\label{coten.pdx}
P\,\Box_D X:= \left\{\sum_i p_i\ot x_i\in P\ot X \; |\; \sum_i \bar\varrho(p_i)\ot x_i = \sum_i p_i\ot \left((\pi\ot \id)\circ \Delta(x_i)\right)\right\}.
\end{equation}
Clearly, $P\,\Box_C X\subseteq P\,\Box_D X$. Since all elements of $B$ are coinvariant, the left $B$-action on $P\ot X$  restricts to the actions  on $P\,\Box_C X$ and $P\,\Box_D X$.  

With all this information at hand we can now state the first main result of this paper.

\begin{theo}\label{thm.main}
Let $B\subseteq P$ be a principal coalgebra $C$-extension. Let $\pi: C\to D$ be a coalgebra morphism and $A$ and $X$ the coinvariants of the induced coactions as defined in \eqref{coinv.be} and \eqref{x}. Then
\begin{zlist}
\item The coaction $\varrho$ restricts to the isomorphism of left $B$-modules
\begin{equation}\label{iso.apx}
A\cong P\,\Box_C X.
\end{equation}
\item $A$ is a projective left  $B$-module.
\item The canonical map $\can: P\ot_BP\to P\ot C$ restricts to the isomorphism of left $P$-modules
\begin{equation}\label{iso.papx}
P\ot_B A\cong P\ot X.
\end{equation}
\item The canonical map $\can: P\ot_BP\to P\ot C$ restricts to the isomorphism
\begin{equation}\label{iso.aapx}
\bar{A}\ot_B A\cong {}^\coD(P\ot X)_{\bar e},
\end{equation}
where 
$$
\bar{A} = \{a\in P\; |\; (\pi\ot \id)\circ \lambda (a) = \bar{e}\ot a\},
$$
and the $\bar e$-coinvariants on the right hand side of \eqref{iso.aapx} are calculated with respect to the left coaction
\begin{equation}\label{mixed.coact}
\Lambda: P\ot X \lto D\ot P\ot X, \qquad \Lambda = (\pi\ot\id\ot\id)\circ (\psi^{-1}\ot \id)\circ (\id\ot \Delta).
\end{equation}
\end{zlist}
\end{theo}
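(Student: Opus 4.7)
\emph{Strategy.} The plan is to exploit that $A$, $\bar A$ and $X$ are all defined as $\bar e$-coinvariants, hence as kernels of maps of the form ``$\pi$-pushed coaction minus tensoring with $\bar e$''. Since $\k$ is a field and principality makes $P$ faithfully flat as a $B$-module on both sides, tensoring such defining sequences over $B$ (or over $\k$) preserves exactness, which identifies each candidate subspace as an explicit kernel. Write $\varrho(p)=p\sw 0\ot p\sw 1$ for the right coaction and $\ell(c):=\can^{-1}(1\ot c)\in P\ot_B P$ for the translation map. For (1), if $a\in A$, a short Sweedler computation combining coassociativity of $\varrho$ with $\bar\varrho(a)=a\ot\bar e$ yields
\[
\bigl(\id\ot(\id\ot\pi)\Delta\bigr)\varrho(a) \;=\; (\varrho\ot\id)\bar\varrho(a) \;=\; \varrho(a)\ot\bar e,
\]
placing $\varrho(a)$, by $\k$-flatness of $P$, in $P\ot X=\ker\bigl(\id_P\ot((\id\ot\pi)\Delta-\id\ot\bar e)\bigr)$; the cotensor condition is coassociativity again. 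For surjectivity, given $u=\sum_i p_i\ot x_i\in P\,\Box_C X$ set $a:=\sum_i p_i\eps(x_i)$ and verify, using the cotensor identity, the counit axiom, and the relation $\pi(x_i)=\eps(x_i)\bar e$ (obtained by applying $\eps\ot\id$ to $(\id\ot\pi)\Delta(x_i)=x_i\ot\bar e$), that $\varrho(a)=u$ and $\bar\varrho(a)=a\ot\bar e$.

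\emph{Part (2).} Let $s:P\to B\ot P$ be the $C$-equivariant left $B$-linear splitting of multiplication provided by principality. Composing the colinearity identity $(\id\ot\varrho)s=(s\ot\id)\varrho$ with $\id\ot\id\ot\pi$ gives $(\id\ot\bar\varrho)s=(s\ot\id)\bar\varrho$; on $a\in A$ this reduces to $(\id\ot\bar\varrho)s(a)=s(a)\ot\bar e$. Expanding $s(a)$ against a fixed $\k$-basis of $B$ forces each $P$-tensorand of $s(a)$ to lie in $A$, so $s$ restricts to a left $B$-linear map $A\to B\ot A$ that splits multiplication $B\ot A\to A$. Hence $A$ is a direct summand of the free left $B$-module $B\ot A$ (free because $A$ is $\k$-free), and therefore projective over $B$.

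\emph{Part (3).} The inclusion $\can(P\ot_B A)\subseteq P\ot X$ is immediate from (1); for the reverse, left $P$-linearity of $\can^{-1}$ reduces the problem to $\ell(x)\in P\ot_B A$ for $x\in X$. The standard identity $(\id\ot\varrho)\ell=(\ell\ot\id)\Delta$ (a consequence of the right $C$-colinearity of $\can$) composed with $\id\ot\id\ot\pi$ yields $(\id\ot\bar\varrho)\ell(x)=\ell(x)\ot\bar e$. Tensoring the defining sequence $0\to A\to P\to P\ot D$ (right arrow $\bar\varrho-\id\ot\bar e$) on the left with $P$ over $B$, exact by flatness, identifies $P\ot_B A\subseteq P\ot_B P$ with the kernel of $\id\ot_B(\bar\varrho-\id\ot\bar e)$, in which the preceding identity places $\ell(x)$.

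\emph{Part (4) and main obstacle.} The canonical entwining $\psi$ makes $P$ a left $C$-comodule via \eqref{left.coact} and upgrades $\can$ to a $(P,P)$-bimodule, $(C,C)$-bicomodule isomorphism: the left $C$-coaction $\lambda\ot_B\id$ on $P\ot_B P$ corresponds under $\can$ to $(\psi^{-1}\ot\id)(\id\ot\Delta)$ on $P\ot C$, whose $\pi$-pushdown is $\Lambda$ from \eqref{mixed.coact}. Pushing both structural $C$-coactions to $D$-coactions via $\pi$ and intersecting their $\bar e$-coinvariants, the flatness/kernel template of (3), now applied on both sides and using flatness of $A$ from (2), identifies the bi-$\bar e$-coinvariants of $P\ot_B P$ with $\bar A\ot_B A$ and those of $P\ot C$ with ${}^\coD(P\ot X)_{\bar e}$; the restriction of $\can$ is the desired isomorphism. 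The main obstacle I anticipate is the verification that $(\psi^{-1}\ot\id)(\id\ot\Delta)$ is indeed the left $C$-coaction on $P\ot C$ transported from $\lambda\ot_B\id$ via $\can$, together with the right $B$-linearity of $\lambda$ needed for $\lambda\ot_B\id$ to descend to $P\ot_B P$; both rest on properties of $\psi$ only alluded to in Section~\ref{sec.results} and will be supplied in Section~\ref{sec.proofs}.
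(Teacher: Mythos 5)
Your proposal is correct and follows essentially the same route as the paper's Lemmas~\ref{lem.apx}--\ref{lem.abar}: the restriction of $\varrho$ with the counit-trick inverse $a=\sum_i p_i\eps(x_i)$ for (1), the restriction of the equivariant splitting to $A$ for (2) (the paper packages this through the cotensor isomorphism but likewise concludes $\bar\sigma=\sigma\!\mid_A$), flatness of $P$ over $B$ to transport the coinvariant kernel through $\can$ for (3), and colinearity of the resulting isomorphism together with flatness of $A$ from (2) to take left $\bar e$-coinvariants for (4). The two verifications you defer --- right $B$-linearity of $\lambda$, and the fact that $\can$ intertwines $\lambda\ot_B\id$ with $(\psi^{-1}\ot\id)\circ(\id\ot\Delta)$ --- are precisely the entwining computations the paper carries out in Lemma~\ref{lem.abar} (using \eqref{entw.inv.al}, \eqref{entw.inv.co} and \eqref{inv}), and both do hold.
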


Let us make a few comments about the meaning of statements of Theorem~\ref{thm.main}. The first statement means that $A$ is a module of sections of a fibre bundle associated to the principal bundle represented by $P$; it indicates the position of $X$ in relation to $A$, and heuristically, allows one to at least informally say that $A$ is fibered by $X$. The second statement means that we can indeed interpret $A$ as (sections of) a bundle over (the space represented by) $B$. Furthermore, it also affirms existence of a connection in the sense of Cuntz and Quillen \cite{CunQui:alg}. The last two statements are stepping stones on the path leading to formulation of necessary conditions for $A$ to be interpreted as a bundle in terms of $A$, $B$ and the fibre $X$ alone. Removing the total algebra of an ambient principal bundle from one side, allows one to replace $C$ by the fibre $X$, and hence the full $C$ plays no role  in \eqref{iso.papx}. The facts that in general $\bar{A}$ is not necessarily isomorphic with $A$ (both contain $B$ as a submodule though) and that both appear on the left hand side of \eqref{iso.aapx} seem to suggest that in a noncommutative situation one perhaps should represent the total space of a fibre bundle by a pair of closely related modules. With this possibility in mind the left hand side of \eqref{iso.aapx} contains only the objects representing total and base spaces of the bundle with fibre $X$. Ideally, we would like all these objects be algebras (not merely modules over the base $B$).  The interpretation of the right hand side of \eqref{iso.aapx} is not that clear (except the fact that the fibre appears there).

It is possible for $A$ to be an algebra if some additional conditions of somewhat technical nature are imposed; these  are discussed in Remark~\ref{rem.Gal}. On the other hand this is also the case and even more can be said when there is a background symmetry encoded by a Hopf algebra. 
\begin{theo}\label{thm.main.DK}
Let $B\subseteq P$ be a principal coalgebra $C$-extension. Let $\pi: C\to D$ be a coalgebra morphism and $A$ and $X$ the coinvariants of the induced coactions as defined in \eqref{coinv.be} and \eqref{x}. Assume further that $H$ is a Hopf algebra with a bijective antipode such that
\begin{blist}
\item  $P$ is a right $H$-comodule algebra with coaction $\delta: P\to P\ot H$;
\item $C$ is a right $H$-module coalgebra, that is the right $H$-action on $C$  satisfies the conditions, for all $h\in H$ and $c\in C$,
\begin{equation}\label{mod.coal}
\Delta_C(c\cdot h) = \Delta_C(c)\cdot \Delta_H(h) \quad \mbox{and} \quad \eps_C(c\cdot h) = \eps_C(c)\eps_H(h);
\end{equation}
\item $D$ is a right $H$-module and $\pi: C\to D$ is a right $H$-module homomorphism;
\item the canonical Galois map is a right $P$-module homomorphism, when $P\ot C$ is equipped with the diagonal right $P$-action,
$$
(p\ot c)\cdot q = (p\ot c)\cdot\delta(q), \qquad \mbox{for all $p,q\in P$ and $c\in C$}.
$$
\end{blist}
Then
\begin{zlist}
\item $A$ is a subalgebra of $P$ containing $B$.
 \item The canonical Galois map restricts to the isomorphism:
\begin{equation}\label{iso.aapdx}
A\ot_B A\cong P\,\Box_D X .
\end{equation}
\end{zlist}
\end{theo}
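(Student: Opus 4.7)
The plan hinges on an explicit formula for the coaction which is forced by the hypotheses. Applying the right $P$-linearity of $\can$ (condition (d)) to $\alpha:=1\otimes_B 1\in P\otimes_B P$ and any $q\in P$ gives $\varrho(q)=\can(\alpha\cdot q)=\can(\alpha)\cdot q=(1\otimes e)\cdot q=q_{(0)}\otimes e\cdot q_{(1)}$, and hence $\bar\varrho(q)=q_{(0)}\otimes\bar e\cdot q_{(1)}$ by (c). This identifies $A$ with $\delta^{-1}(P\otimes K)$, where $K:=\{h\in H:\bar e\cdot h=\bar e\eps(h)\}$ is the stabiliser subspace of $\bar e$, which is a subalgebra of $H$ by associativity of the $H$-action and $\bar e\cdot 1=\bar e$. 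Part (1) now follows because $\delta$ is an algebra homomorphism, so $A=\delta^{-1}(P\otimes K)$ is a subalgebra of $P$; the inclusion $B\subseteq A$ is immediate from $\varrho(b)=b\otimes e$.

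For part (2), I would set up two parallel exact sequences and transport one into the other through the iso $\can:P\otimes_B A\xrightarrow{\cong}P\otimes X$ of Theorem~\ref{thm.main}(3). First, introduce the obstruction map $\phi:P\to P\otimes D$, $\phi(p):=\bar\varrho(p)-p\otimes\bar e$, with kernel $A$, and equip $P\otimes D$ with the diagonal right $B$-action $(q\otimes d)\cdot b=qb_{(0)}\otimes d\cdot b_{(1)}$ inherited from (d). A short computation shows that $\phi$ is right $B$-linear, the failure of pointwise linearity being absorbed by $\phi(B)=0$. Left $B$-projectivity of $A$ from Theorem~\ref{thm.main}(2) makes $-\otimes_B A$ exact, producing
\[
0\longrightarrow A\otimes_B A\longrightarrow P\otimes_B A\xrightarrow{\,\phi\otimes_B\id\,}(P\otimes D)\otimes_B A.
\]
Second, $P\,\Box_D X=\ker\Phi$ for $\Phi:=\bar\varrho\otimes\id-\id\otimes(\pi\otimes\id)\Delta:P\otimes X\to P\otimes D\otimes X$. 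Using $\varrho(a)\in P\,\Box_C X$ for $a\in A$ (Theorem~\ref{thm.main}(1)) to rewrite the second summand of $\Phi\circ\can$ yields the factorisation $\Phi\circ\can=\Xi\circ(\phi\otimes_B\id)$ on $P\otimes_B A$, where $\Xi:(P\otimes D)\otimes_B A\to P\otimes D\otimes X$ is the diagonal-action map sending $(p\otimes d)\otimes_B a$ to $((p\otimes d)\cdot a_{(0)})\otimes e\cdot a_{(1)}$.

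The main obstacle is then to show that $\Xi$ is a bijection: once this is established, $\ker(\Phi\circ\can)=\ker(\phi\otimes_B\id)$, i.e.\ $\can^{-1}(P\,\Box_D X)=A\otimes_B A$, giving the surjectivity of $\can|_{A\otimes_B A}$ onto $P\,\Box_D X$; injectivity is automatic from the injectivity of $\can$ combined with the $B$-flatness of $A$. I would construct the inverse of $\Xi$ explicitly via the translation map $\tau(x)=x^{[-]}\otimes x^{[+]}\in P\otimes_B A$ of the principal extension, by $\Xi^{-1}(p\otimes d\otimes x):=((p\otimes d)\cdot x^{[-]})\otimes_B x^{[+]}$. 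The two identities $\Xi\Xi^{-1}=\id$ and $\Xi^{-1}\Xi=\id$ then reduce to the translation-map identities $x^{[-]}\varrho(x^{[+]})=1\otimes x$ and $a_{(0)}(e\cdot a_{(1)})^{[-]}\otimes_B(e\cdot a_{(1)})^{[+]}=1\otimes_B a$, propagated through the diagonal action on $P\otimes D$ via $\delta$-coassociativity; this bookkeeping, where the Hopf algebra hypothesis (including the bijective antipode, used for principality and for bijectivity of the entwining) plays its structural role, is the technical heart of the argument.
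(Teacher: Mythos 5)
Your proposal is correct, and while your part (1) is essentially the paper's argument (the paper obtains $\varrho(q)=\sum q\sw 0\ot e\cdot q\sw 1$ from the Doi--Koppinen form of the entwining in Remark~\ref{rem.DK} and Lemma~\ref{lem.r}, which is the same computation as your evaluation of condition (d) at $1\ot_B 1$; the identification $A=\delta^{-1}(P\ot K)$ is valid over a field since $\ker(\id_P\ot f)=P\ot\ker f$), your part (2) takes a genuinely different route. The paper specialises Theorem~\ref{thm.main}(4): it proves $\bar A=A$ by two computations with $S^{-1}$ and then identifies ${}^\coD(P\ot X)_{\bar e}$ with $P\,\Box_D X$ using the explicit formula $\psi^{-1}(p\ot c)=\sum_i c\cdot S^{-1}(h_i)\ot p_i$. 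You instead bypass $\lambda$, $\Lambda$ and $\bar A$ altogether, realising $A\ot_B A$ and $P\,\Box_D X$ as kernels of $\phi\ot_B\id$ and $\Phi$, and comparing them through the isomorphism $\chi$ of Lemma~\ref{lem.pbax}. Your factorisation $\Phi\circ\can=\Xi\circ(\phi\ot_B\id)$ does hold: both sides evaluate to $\sum p\sw 0 a\sw 0\ot(\bar e\cdot p\sw 1)\cdot a\sw 1\ot e\cdot a\sw 2-\sum pa\sw 0\ot\bar e\cdot a\sw 1\ot e\cdot a\sw 2$, and your key step, the bijectivity of $\Xi$ via the translation map $\tau(x)=\can^{-1}(1\ot x)\in P\ot_B A$, also works as planned: $\Xi\circ\Xi^{-1}=\id$ follows by applying $\delta\ot\id_C$ to $\can(\tau(x))=1\ot x$ and using multiplicativity of $\delta$, while $\Xi^{-1}\circ\Xi=\id$ follows by applying the map $u\ot_B a\mapsto\delta(u)\ot_B a$ to $\can^{-1}(\varrho(a))=1\ot_B a$. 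The one delicate point you should spell out is that this last map descends through $\ot_B$ \emph{precisely} because the right $B$-action on $P\ot H$ (and on $P\ot D$) is the diagonal one, $(u\ot h)\cdot b=(u\ot h)\delta(b)$, so balancedness is automatic; alternatively, $\Xi$ is bijective because the untwisting $p\ot d\mapsto\sum p\sw 0\ot d\cdot S^{-1}(p\sw 1)$ converts it into $\id_D\ot\chi$ up to flips. As for what each approach buys: the paper's proof is very short given Theorem~\ref{thm.main}(4) and yields the extra identity $\bar A=A$ under the Doi--Koppinen hypotheses; yours needs only parts (1)--(3) of Theorem~\ref{thm.main}, never touches the inverse entwining, and makes structurally transparent why $P\,\Box_D X$ appears (it is the kernel of the $D$-coinvariance obstruction transported through $\can$). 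Two cosmetic corrections: the bijective antipode does not in fact enter your translation-map bookkeeping --- it is consumed entirely by the principality hypothesis (bijectivity of $\psi$), unless you opt for the untwisting argument --- and $\phi$ is honestly right $B$-linear for the diagonal action (using $\bar\varrho(b)=b\ot\bar e$ for $b\in B$), so there is no ``failure of pointwise linearity'' to absorb.
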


\begin{rema}\label{rem.DK}
In view of \eqref{right.pc}, the condition (d) means that the canonical entwining map is of the {\em Doi-Koppinen type} (see \cite{Doi:uni}, \cite{Kop:var}, \cite{Brz:mod}) i.e.\
\begin{equation}\label{DK.entw}
\psi(c\ot p) = \sum_i p_i \ot c\cdot h_i, \qquad \mbox{where $c\in C$, $p\in P$, and $\sum_i p_i \ot h_i =\delta(p)$.}
\end{equation}
Indeed, 
$$
\psi(c\ot p) = \can(\can^{-1}(1\ot c)p) = \can\left(\can^{-1}\left(\left(1\ot c\right)\cdot p\right)\right) = (1\ot c)\cdot \delta(p),
$$
thus yielding \eqref{DK.entw}. The 
 inverse of $\psi$ comes out as 
$$
\psi^{-1}(p \ot c) =  \sum_i  c\cdot S^{-1}(h_i)\ot p_i,
$$
where $S$ is the antipode of $H$.
\end{rema}

The situation of $H=C$, $\varrho=\delta$ and $e=1_H$, so that $\bar e = \pi(1_H)$, is a special case. Since $P$ is a right $H$-comodule algebra the canonical entwining map and its inverse come out as
$$
\begin{aligned}
\psi: H\ot P\lra P\ot H, \qquad & h\ot p \lto \sum_ip_i \ot h h_i,\\
\psi^{-1} P\ot H \lra H\ot P, \qquad & p\ot h \lto  \sum_i h S^{-1}(h_i)\ot p_i,
\end{aligned}
$$
where, $\varrho(p) = \delta(p) = \sum_i p_i \ot h_i$. Since $\pi: H\to D$ is a homomorphism of right $H$-modules and the comultiplication in a Hopf algebra is an algebra homomorphism, the coinvariants $X = H^\coD_{\bar e}$ form a subalgebra of $H$. As before $\Delta(X)\subseteq H\ot X$. Thus $X$ is a left coideal subalgebra of $H$ or, in more geometric terms, an algebra of functions on a noncommutative or quantum homogeneous space.

\begin{rema}\label{rem.Gal}
In the situation of Theorem~\ref{thm.main}, for $A$ to be an algebra, it is sufficient that the canonical Galois map for the extension $P^{\coD} \subseteq P$ be injective. Likewise, 
 if $C$ has algebra structure and the canonical Galois map for the extension $C^{\coD} \subseteq C$ is injective, then $X$ is an algebra.
One might expect that the injectivity of the Galois maps will follow provided the map $\pi$ be surjective as  then  corresponding to the classical fact that restriction of a free action to a subgroup remains free. In the non-commutative setup presented here, however, the situation is far from being so straightforward, as the detailed discussion of Schauenburg and Schneider \cite{SchSch:gen} indicates. Corollary~4.6  of \cite{SchSch:gen} might be of particular interest. It states sufficient conditions for the canonical Galois map for the extension $P^{\coD} \subseteq P$ to be injective (to be a principal coalgebra extension in fact), and thus in particular for $A=P^{\coD}$ to be an algebra. These are:
\begin{blist}
\item  $\pi:C\to D$ is a surjective coalgebra map;
\item  $\pi$ splits as a right $D$-comodule map and $C$ is injective as a right $D$-comodule;
\item  $\psi (\ker \pi\ot P)\subseteq \ker ((\id\ot \pi)\circ \psi)$, and the induced map
$$
\theta: D\ot P\lra P\ot D, \qquad d \ot p\mapsto (\id \ot \pi)\circ \psi(c\ot p), \quad c\in \pi^{-1}(d),
$$
is bijective.
\end{blist}
The assumption (a) can be made with no harm, and indeed without it one could hardly talk about reflecting classical inclusion of a subgroup into the structure group. The assumption (b) is always satisfied if $D$ has a cointegral (i.e.\ it is a coseparable coalgebra), which is the case for instance if $D$ is a coalgebra of a compact quantum group (the cointegral is obtained from the Haar measure). There seem to be no natural geometrically motivated condition that would force (c) to hold in generality of Theorem~\ref{thm.main}. On the other hand, if the existence of additional symmetry is requested as in Theorem~\ref{thm.main.DK}, then property (c) follows in view of the explicit form of $\psi$ as described in Remark~\ref{rem.DK} and the fact that $\pi$ is a right $H$-module homomorphism.
 \end{rema}

\section{Examples}\label{sec.ex}
In order to capture more fully geometric contents of the examples as well as to make forays into the topological world of $C^*$-algebras of continuous functions on noncommutative topological spaces, we assume in this section that $\mathbb{K}=\C$.

\subsection{The quantum flag manifold}\label{sec.flag}
The first example presents the quantum flag manifold as a bundle with the quantum homogeneous fibre, and it illustrates a special case of Theorem~\ref{thm.main.DK} in which $H=C$. We set
$$
H = C =\pol(U_q(2))
$$
to be  a quantum group of matrix type with $*$-algebra generators $u,\alpha,\gamma$ organised into a unitary matrix
\begin{equation}\label{uq2}
\mathbf{v} =  \begin{pmatrix} u & 0 & 0 \cr
0 & \alpha & -q\gamma^*u^*\cr 
0 & \gamma & \alpha^*u^*
\end{pmatrix}.
\end{equation}
These satisfy the relations: $u$ is a central unitary,  while
\begin{subequations}\label{su2}
\begin{equation}
\label{su2.1}
\alpha\gamma = q \gamma\alpha,  \qquad \gamma\gamma^* = \gamma^*\gamma ,
\end{equation}
\begin{equation}\label{su2.2}
\alpha\gamma^* = q \gamma^*\alpha, \qquad \alpha^*\alpha + \gamma\gamma^* = 1, \qquad \alpha\alpha^* + q^2 \gamma\gamma^*=1;
\end{equation}
\end{subequations}
see \cite{w1}.  Since $\pol(U_q(2))$ is a $*$-Hopf algebra of matrix type, the comultiplication and counit come out as
\begin{equation}\label{uq2.copr}
\Delta(u) = u\ot u, \quad \Delta(\alpha) = \alpha\ot \alpha -q\gamma^*u^*\ot \gamma,\quad \Delta(\gamma) = \gamma\ot\alpha +\alpha^*u^*\ot\gamma.
\end{equation}
\begin{equation}\label{uq2.counit}
\eps(u) = \eps(\alpha) =1, \qquad \eps(\gamma) = 0.
\end{equation}

Let $0\leq s \leq 1 $, and set
\begin{equation}\label{xz}
\xi = (1-s^2)\gamma\gamma^* + s(\gamma\alpha u+\alpha^*\gamma^*{u^*}),  \quad 
\zeta = (1-s^2)\alpha\gamma^* + s(\alpha^2u-q{\gamma^*}^2{u^*}),
\end{equation}
 Clearly, $\xi^*=\xi$, and the elements  $\xi$,  $\zeta$ and $\zeta^*$ satisfy  relations
\begin{equation}\label{rel.all.sphere}
\zeta\xi = q^2\xi\zeta, \quad \zeta\zeta^* = (s^2+q^2\xi)(1-q^2\xi), \quad \zeta^*\zeta = (s^2+\xi)(1-\xi),
\end{equation}
and thus they generate a $*$-subalgebra of $\pol(U_q(2))$ isomorphic with the coordinate algebra of the (generic) Podle\'s sphere $\pol(S^2_{q,s})$ \cite{Pod:sph} or a two-parameter deformation of the complex projective line $\pol(\C P^1_{q,s})$. We identify the $*$-algebra generated by $\xi,\zeta$ with $\pol(\C P^1_{q,s})$. Using the explicit form of $\Delta$ in \eqref{uq2.copr} one can check that $\pol(\C P^1_{q,s})$ is a left coideal subalgebra of $\pol(U_q(2))$. As explained e.g.\ in \cite{Brz:hom} or \cite{MulSch:hom}, $\pol(\C P^1_{q,s})$ can be also identified with fixed points of the coaction of a specific coalgebra and right $\pol(U_q(2))$-module. Consider the right ideal $J$ generated by the restriction of $\pol(\C P^1_{q,s})$ to the kernel of the counit of $\pol(U_q(2))$, that is
\begin{equation}\label{ideal}
J = \langle \xi, \zeta-s, \zeta^*-s\rangle \pol(U_q(2))\subset \pol(U_q(2)).
\end{equation}
One can easily check (or invoke \cite{Brz:hom}) that $J$ is a coideal in $\pol(U_q(2))$ and thus we have a coalgebra and right $\pol(U_q(2))$-module epimorphism
\begin{equation}\label{uq2.pi}
\pi: \pol(U_q(2)) \lra D:= \pol(U_q(2))/J.
\end{equation}
The results of \cite{MulSch:hom} ensure that $\pol(\C P^1_{q,s})$ are coinvariants of the induced coaction,
\begin{equation}\label{uq2.coinv}
\pol(\C P^1_{q,s}) = \pol(U_q(2))^\coD_{\pi(1)} = \{x\in \pol(U_q(2))\; |\; (\id\ot \pi)\circ\Delta(x) = x\ot \pi(1)\},
\end{equation}
and that $D$ is spanned by group-like elements. The form of these elements can be worked out as in \cite[Section~6]{BrzMaj:geo}.

\begin{prop}\label{prop.basis}
A basis for $D$ is formed of the following group-like elements, for all $m,n\in \Z$: 
\begin{equation}
d_{m,n} =  
\begin{cases}
 \pi\left(u^m{\displaystyle\prod_{k=0}^{n}} \left(\alpha - q^{k}s\gamma^*u^*\right)\right) = \pi\left(u^m{\displaystyle\prod_{k=0}^{n-1}}\left(\alpha + q^{k}s\gamma\right)\right) , &  n>0, \cr
\cr
\pi(u^m), & n=0,\cr \cr
\pi\left(u^m{\displaystyle\prod_{k=1}^{-n}} \left(\alpha^* +q^{-k}s\gamma^*\right)\right) = \pi\left(u^m{\displaystyle\prod_{k=0}^{-n-1}} \left(\alpha^* - q^{-k}s\gamma u \right)\right) , & n<0, 
\end{cases}
\end{equation}
where the products are from left to right.
\end{prop}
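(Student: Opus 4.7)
The plan is to verify three things: the two expressions for $d_{m,n}$ agree, each $d_{m,n}$ is group-like in $D$, and the family $\{d_{m,n}\}_{m,n\in\Z}$ is a vector-space basis of $D$. The case $n=0$ is immediate: $u$ is group-like in $\pol(U_q(2))$, hence so is $u^{m}$ (since $\Delta(u^{m})=u^{m}\otimes u^{m}$), and because $\pi$ is a coalgebra map it sends group-likes to group-likes, so $d_{m,0}=\pi(u^{m})$ is group-like. The cases $n>0$ and $n<0$ are treated by parallel calculations; I describe $n>0$ and write $\omega_n:=\prod_{k=0}^{n-1}(\alpha+q^{k}s\gamma)$, so that $d_{m,n}=\pi(u^{m}\omega_n)$.

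For group-likeness I would compute $(\pi\otimes\pi)\Delta(\omega_n)$ and show it equals $\pi(\omega_n)\otimes\pi(\omega_n)$; the factor $u^{m}$ then takes care of itself by centrality and group-likeness of $u$. Using the matrix form \eqref{uq2.copr} one has
\begin{equation*}
\Delta(\alpha+q^{k}s\gamma)=(\alpha+q^{k}s\gamma)\otimes\alpha+(q^{k}s\alpha^{*}-q\gamma^{*})u^{*}\otimes\gamma,
\end{equation*}
and by multiplicativity $\Delta(\omega_n)$ expands into $2^{n}$ ordered terms. The diagonal term contributes $\omega_n\otimes\alpha^{n}$, while the remaining $2^{n}-1$ cross-terms all carry some $\gamma$-factor in the right tensorand. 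After normal-ordering on the right via \eqref{su2.1}--\eqref{su2.2} the cross-terms should collapse, under $\id\otimes\pi$, so as to convert $\pi(\alpha^{n})$ into $\pi(\omega_n)$. The key inputs are the congruences $\pi(\xi)=0$, $\pi(\zeta)=s\pi(1)$, $\pi(\zeta^{*})=s\pi(1)$ modulo $J$, together with the explicit definitions \eqref{xz} of the generators of $\pol(\C P^1_{q,s})$; the $q^{k}$-weights in the factors of $\omega_n$ are set precisely so that this recombination goes through. I would proceed by induction on $n$, the base $n=1$ being a direct reduction of $\pi((s\alpha^{*}-q\gamma^{*})u^{*})\otimes\pi(\gamma)$ via $\zeta^{*}-s\in J$.

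Equivalence of the two displayed expressions for $d_{m,n}$ is a separate induction on $n$: one verifies, using \eqref{su2.1}--\eqref{su2.2} and \eqref{xz}, an identity of the form $(\alpha-q^{n-1}s\gamma^{*}u^{*})\omega_{n-1}\equiv\omega_{n-1}(\alpha+q^{n-1}s\gamma)\pmod{J}$. Linear independence of $\{d_{m,n}\}$ is automatic because distinct group-like elements in any coalgebra are linearly independent, and pairwise distinctness is detected from the $u$-degree $m$ and the $\alpha/\alpha^{*}$-degree $|n|$ of any PBW lift. For spanning I would fix the standard PBW-type basis of $\pol(U_q(2))$ consisting of ordered monomials in $u^{\pm 1},\alpha,\alpha^{*},\gamma,\gamma^{*}$ and iteratively apply the right congruences $\xi\equiv 0$, $\zeta\equiv s$, $\zeta^{*}\equiv s\pmod{J}$ to reduce every monomial modulo $J$ to a scalar multiple of some $u^{m}\omega_n$ or its $n<0$ counterpart; the fact that $D$ is spanned by group-likes, as follows from \cite{MulSch:hom}, then guarantees no further ones appear. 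The main obstacle I anticipate is the combinatorial bookkeeping in the group-like step, namely matching the $2^{n}$-fold expansion of $\Delta(\omega_n)$, after right-tensorand collapse modulo $J$, exactly to $\pi(\omega_n)\otimes\pi(\omega_n)$; this generalises the $s=0$ computation of \cite[Section~6]{BrzMaj:geo}, where the simpler congruence $\pi(\gamma)=0$ does the work, and it is precisely the $s$-dependent corrections that force the particular $q^{k}s$ normalisation of the linear factors in $\omega_n$.
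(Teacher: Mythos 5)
There is a genuine gap at the heart of your proposal: the group-likeness step is a plan rather than a proof, and you yourself flag its decisive part (``matching the $2^n$-fold expansion \dots exactly to $\pi(\omega_n)\otimes\pi(\omega_n)$'') as an anticipated obstacle. The paper never faces this combinatorics, because it first establishes the auxiliary recursion \eqref{d.act+} (and its negative counterpart \eqref{d.act-}), $sd_{0,n+1} = d_{0,n}\cdot(s\alpha^*u^* + q^{-n}\gamma) = d_{0,n}\cdot(s\alpha^*u^* - q^{-n+1}\gamma^*u^*)$, which in turn rests on the mod-$J$ identities \eqref{ag1} and \eqref{ag2} (extracted from $\pi((\zeta-s)\alpha^*)$ and $\pi((\zeta^*-s)\alpha)$) together with the exact commutation rule \eqref{com.n} in $\pol(U_q(2))$. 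With \eqref{d.act+} in hand, the inductive step for $\Delta_D(d_{0,n+1})$ involves only two terms: the cross-term's left tensorand $q^n d_{0,n}\cdot(s\alpha^*u^* - q^{-n+1}\gamma^*u^*)$ collapses to $sq^n d_{0,n+1}$, and then the right tensorands recombine by mere linearity of the module action, $d_{0,n}\cdot(\alpha + sq^n\gamma) = d_{0,n+1}$. Note also that your description mislocates where the work happens: you propose a ``right-tensorand collapse'' converting $\pi(\alpha^n)$ into $\pi(\omega_n)$, but since $J$ is a \emph{right} ideal, reductions are available only against \emph{left} factors $\xi$, $\zeta-s$, $\zeta^*-s$, and in the actual mechanism it is the left tensorand of the cross-term (built from $\alpha^*u^*$ and $\gamma^*u^*$, hence not visibly a multiple of $\pi(\omega_n)$) that must be reduced. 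Without \eqref{d.act} or an equivalent identity, your induction has no engine, and the $2^n$-term route you sketch would confront precisely the bookkeeping the paper's two-term step is designed to avoid.

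Two secondary points. For the equivalence of the two product expressions, your proposed congruence $(\alpha - q^{n-1}s\gamma^*u^*)\,\omega_{n-1} \equiv \omega_{n-1}(\alpha + q^{n-1}s\gamma) \pmod{J}$ is an awkward tool: congruence modulo the right ideal $J$ is preserved under right multiplication but \emph{not} under left multiplication, so such an identity cannot simply be spliced into the middle of a longer product. The paper instead derives \eqref{gg*}, namely $\pi(\gamma a) = -q\pi(\gamma^*u^*a)$ for all $a$, from the single computation $\pi(\xi\alpha^*)=0$, and converts the factors from the left one at a time (in combination with commutation moves), exploiting throughout that $\pi$ is a right $\pol(U_q(2))$-module map but not an algebra map. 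Your handling of linear independence (distinct group-likes are independent) and of spanning (reduction modulo $J$, with the group-like spanning of $D$ from \cite{MulSch:hom} and the argument of \cite[Proposition~6.1]{Brz:hom}) matches the paper's own level of detail and is fine. In sum: you have assembled the correct ingredients and the correct inductive skeleton, but the central computation --- exactly the content of \eqref{ag1}, \eqref{ag2}, \eqref{com.n} and \eqref{d.act} --- is left unproved, so the proposal as it stands does not establish the proposition.
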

\begin{proof}
Clearly $u^n$ are group-like elements. Next observe that, for all $a\in \pol(U_q(2))$, $\pi(\xi a) =0$ and $\pi(\zeta a) = \pi(\zeta^* a) =s\pi(a)$, and thus we can compute
\[
\begin{aligned}
0 = & \pi(\xi \alpha^*) = \pi\left((1-s^2)\gamma\gamma^*\alpha^* + s\gamma\alpha \alpha^* u+\alpha^*\gamma^*\alpha^*{u^*}\right)\\
&= \pi\left(\left((1-s^2)q\gamma\alpha^* + sq\alpha^{*2}u-q^2{\gamma}^2{u^*}\right)\gamma^*\right) +s\pi(\gamma u) \\
&= q\pi(\zeta^*\gamma^*) +s\pi(\gamma u) = qs\pi(\gamma^*) + s\pi(\gamma u).
\end{aligned}
\]
Therefore, for all $a\in \pol(U_q(2))$,
\begin{equation}\label{gg*}
\pi(\gamma a) = -q\pi(\gamma^*u^* a), \qquad \pi(\gamma^* a) = -q^{-1}(\gamma u a),
\end{equation}
where the second equality follows by the centrality and unitarity of $u$. Using these relations, while taking care that $\pi$ is a right $\pol(U_q(2))$-module homomorphism (not an algebra map!), one can inductively prove the equalities in the product formulae for $d_{m,n}$. 

In a similar way, the equality $\pi((\zeta-s)\alpha^*)$ implies that, for all $a\in  \pol(U_q(2))$,
\begin{equation}\label{ag1}
\pi\left(\left(s\alpha^* u^*-q\gamma^*u^*\right)a\right) = s\pi\left(\left(\alpha -sq\gamma^*u^*\right)a\right),
\end{equation}
while $\pi((\zeta^*-s)\alpha)$ implies
\begin{equation}\label{ag2}
\pi\left(\left(s\alpha-\gamma\right)a\right) = s\pi\left(\left(\alpha^*u^* -s\gamma\right)a\right).
\end{equation}

The final technical equality arises from the analysis of the commutation rules in $\pol(U_q(2))$. These imply, for all $m,n\in \Z$ and $t\in \C$:
\begin{equation}\label{com.n}
\left(\alpha -q^nt\gamma^*u^m\right)\left(t\alpha^*u^m+q^{-n}\gamma\right) = \left(t\alpha^*u^m +q^{-n+1}\gamma\right)\left(\alpha - q^{n+1}t\gamma^*u^m\right).
\end{equation}

Armoured with these equalities we can proceed to prove that $d_{m,n}$ are group-like elements of $D$. Obviously, all the $d_{m,0}$ are group-like. Since $u$ is group-like and central, and $\pi$ is a coalgebra and right $\pol(U_q(2))$-module homomorphism suffices it to prove that the elements  $d_{0,n}$ are group-like.
We observe first that, for all $n\in \N$,
\begin{subequations}\label{d.act}
\begin{equation}\label{d.act+}
sd_{0,n+1}  = d_{0,n}\cdot (s\alpha^*u^* +q^{-n}\gamma)= d_{0,n}\cdot (s\alpha^*u^* -q^{-n+1}\gamma^*u^*) ,
\end{equation}
\begin{equation}\label{d.act-}
sd_{0,-n-1} = d_{0,-n}\cdot (s\alpha u - q^n\gamma u) = d_{0,-n}\cdot (s\alpha u+ q^{n+1}\gamma^*)
\end{equation}
\end{subequations}
The recursive relations \eqref{d.act} can be proven by induction. Indeed,
$$
\begin{aligned}
d_{0,1}\cdot (s\alpha^*u^* +q^{-1}\gamma)&= \pi\left(\left(\alpha-qs\gamma^*u^*\right)\left(s\alpha^*u^* +q^{-1}\gamma\right)\right)\\
&=\pi\left(\left(s\alpha^*u^* +\gamma\right)\left(\alpha - q^{2}s\gamma^*u^*\right)\right)\\
&=s\pi\left(\left(\alpha-qs\gamma^*u^*\right)\left(\alpha - q^{2}s\gamma^*u^*\right)\right) = d_{0,2},
\end{aligned}
$$
where \eqref{com.n} (with $m=-1$, $n=1$ and $t=s$) was used to derive the second equality and \eqref{ag1} to derive the third one. Next, assume that the equality \eqref{d.act+} holds for $n-1$. Then using the definition of $d_{m,n}$ and the equality  \eqref{com.n} we obtain
$$
\begin{aligned}
d_{0,n}\cdot (s\alpha^*u^* +q^{-n}\gamma)&= d_{0,n-1}\cdot \left(\left(\alpha-q^ns\gamma^*u^*\right)\left(s\alpha^*u^* +q^{-n}\gamma\right)\right)\\
&=d_{0,n-1}\cdot \left(\left(s\alpha^*u^* +q^{-n+1}\gamma\right) \left(\alpha - q^{n+1}s\gamma^*u^*\right)\right)\\
&=\left(d_{0,n-1}\cdot \left(s\alpha^*u^* +q^{-n+1}\gamma\right)\right)\cdot \left(\alpha - q^{n+1}s\gamma^*u^*\right)\\
&=sd_{0,n}\cdot \left(\alpha - q^{n+1}s\gamma^*u^*\right) = d_{0,n+1},
\end{aligned}
$$
where the fourth equality follows by the inductive assumption. This proves the first of equalities in \eqref{d.act+}. The second one follows by \eqref{gg*}. The equalities \eqref{d.act-} are proven in a similar way with the help of \eqref{ag2}. 

We are now in position to prove that the $d_{0,n}$ are group-like. We deal only with the positive $n$ case, the other case is proven in a similar way. First we use \eqref{uq2.copr} and the definition of $\pi$ to compute
$$
\begin{aligned}
\Delta_D(d_{0,1}) &= (\pi\ot\pi)\circ\Delta (\alpha +s\gamma) \\
&= \pi(\alpha +s\gamma)\ot \pi(\alpha)  + \pi(s\alpha^*u^*-q\gamma^*u^*)\ot\pi(\gamma)\\
&= d_{0,1}\ot \pi(\alpha) + s \pi(\alpha-q\gamma^*u^*)\ot\pi(\gamma)\\
&= d_{0,1}\ot \pi(\alpha) + s \pi(\gamma)) = d_{0,1}\ot d_{0,1},
\end{aligned}
$$
where the third equality follows by \eqref{ag1}. Thus $d_{0,1}$ is a group-like element. Now, assume that $d_{0,n}$ is group-like, then first using the recursive definition of $d_{0,n}$  and then relations \eqref{d.act+} we can compute
\[
\begin{aligned}
\Delta_D(d_{0,n+1}) &= \Delta_D\left(d_{0,n}\cdot (\alpha +q^n\gamma)\right) \\
&= d_{0,n}\cdot \left(\alpha +q^ns\gamma\right) \ot d_{0,n}\cdot \alpha +
q^n d_{0,n}\cdot \left(s\alpha^*u^* -q^{-n+1}\gamma^*u^*\right) \ot d_{0,n}\cdot \gamma \\
&= d_{0,n+1}\ot d_{0,n}\cdot \alpha +
sq^n d_{0,n+1}\ot d_{0,n}\cdot \gamma = d_{0,n+1}\ot d_{0,n+1},
\end{aligned}
\]
and thus conclude that all the $d_{0,n}$, $n\in \N$, and hence $d_{m,n}$, $m\in\Z$, $n\in\N$ are group-like. The negative $n$ case is proven in a similar way. Finally, that the $d_{m,n}$ span $D$ can be proven as in \cite[Proposition~6.1]{Brz:hom}.
\end{proof}

Since the coalgebra $D$ is spanned by group-like elements labelled by elements of $\Z\times \Z$ it can be equipped with a Hopf algebra structure of the group algebra of the free Abelian group $\Z\times \Z$, i.e.\  with the product 
$$
d_{m,n}d_{k,l} = d_{m+k,n+l}, \qquad \mbox{for all $k.l.m.n\in\Z$}.
$$
This Hopf algebra can be identified with the algebra of Laurent polynomials in two variables, say $u_1$ and $u_2$, by $d_{m,n} = u_1^mu_2^n$ and thus simply with $\pol(\T^2)$.

For the algebra $P$ we take the coordinate algebra of the quantum group $SU_q(3)$, i.e.\ $P= \pol(SU_q(3))$. This is  a $*$-Hopf algebra of matrix group type,  generated by  the  entries of the quantum matrix $\mathbf{u} = (u_{ij})_{i,j=1}^3$, which satisfy the following non-commutation relations:
\begin{subequations}\label{qmatrix}
\begin{eqnarray}
u_{ij}u_{ik} & = & qu_{ik}u_{ij}, \;\;\;\;\; j<k, \label{qmatrix1} \\
u_{ji}u_{ki} & = & qu_{ki}u_{ji}, \;\;\;\;\; j<k, \label{qmatrix2} \\
u_{ij}u_{km} & = & u_{km}u_{ij}, \;\;\;\;\; i<k, \; j>m, \label{qmatrix3} \\
u_{ij}u_{km}-u_{km}u_{ij} & = & (q-q^{-1})u_{im}u_{kj},
\;\;\;\;\; i<k, \; j<m, \label{qmatrix4}
\end{eqnarray}
\end{subequations}
with $i,j,k,m\in\{1,2,3\}$, together with the  determinant relation 
\begin{equation}\label{determinant}
\sum_{i_1=1}^3\sum_{i_2=1}^3\sum_{i_3=1}^3 E_{i_1i_2i_3}
u_{j_1 i_1}u_{j_2 i_2}u_{j_3 i_3}=E_{j_1 j_2 j_3},
\;\;\;\;\; \forall(j_1,j_2,j_3)\in\{1,2,3\},
\end{equation}
where
\begin{equation}\label{inversions}
E_{i_1 i_2 i_3}=\begin{cases} (-q)^{I(i_1,i_2,i_3)} & \text{if}
\; i_r\neq i_s \; \text{for} \; r\neq s, \\ 0 & \text{otherwise,}
\end{cases}
\end{equation}
and $I(i_1,i_2,i_3)$ denotes the number of inversed pairs in the
sequence $i_1,i_2,i_3$; \cite{d,so,frt,ks}. The $*$-structure is given by 
\begin{equation}\label{u*}
u_{ij}^* = (-q)^{j-i}\left(u_{i_1j_1}u_{i_2j_2} - q u_{i_1j_2}u_{i_1j_1}\right),
\end{equation}
where $i_1<i_2\in \{1,2,3\}\setminus \{i\}$ and $j_1<j_2\in \{1,2,3\}\setminus \{j\}$. Finally, the comultiplication, counit  and the antipode are:
$$ 
\Delta(u_{ij})=\sum_{k=1}^n u_{ik}\otimes u_{kj}, \qquad \eps(u_{ij}) = \delta_{ij}, \qquad S(u_{ij}) = u_{ji}^*. 
$$
As observed by Br\c{a}giel \cite{bragiel}, $\pol(SU_q(3))$ is a dense $*$-subalgebra of the $C^*$-algebra $C(SU_q(3))$ of the continuous functions on the compact quantum group $SU_q(3)$ introduced by Woronowicz in \cite{w2}, \cite{w3}. 

The map determined by $\mathbf{u}\lto \mathbf{v}$ is a $*$-Hopf algebra epimorphism $\pol(SU_q(3))\lra \pol(U_q(2))$, which makes $\pol(SU_q(3))$ into a right $\pol(U_q(2))$-comodule algebra with the coaction
$$
\varrho = \delta: \pol(SU_q(3))\lra \pol(SU_q(3))\ot \pol(U_q(2)), \qquad u_{ij}\lto \sum_{k=1}^n u_{ik}\otimes v_{kj}.
$$
As shown in \cite{BrzSzy:fla}, $\pol(SU_q(3))$ is  a principal $\pol(U_q(2))$-comodule algebra with coinvariants $\pol(\C P_q^2)$. Bearing in mind the (possible) interpretation of $D$ as the coalgebra part of the coordinate algebra $\pol(\T^2)$ we can interpret the fixed point subalgebra of $\pol(SU_q(3))$ under the coaction $(\id\otimes \pi)\circ\varrho$ as the algebra of coordinate functions on the quantum flag manifold $SU_q(3)/\T^2$. As the definition of the $\pi$  depends on an additional parameter $s$ rather than obtaining the standard quantum flag manifold such as those considered in \cite{StoDij:fla} we obtain its more general, two-parameter version, which we denote by $FM_{q,s}$. Thus the coordinate algebra of $FM_{q,s}$ is given by
$$
\pol(FM_{q,s}) = \{ x\in \pol(SU_q(3)) \;|\; (\id\otimes \pi)\circ\varrho (x) = x\ot d_{0,0}\}.
$$
Since we fulfil all the assumptions of Theorem~\ref{thm.main.DK} we thus conclude that 
$$
\pol(FM_{q,s}) \cong \pol(SU_q(3))\,\Box_{\pol(U_q(2))} \pol(\C P_{q,s}^1),
$$
and that 
$$
\pol(FM_{q,s})\ot_{\pol(\C P_q^2)}\pol(FM_{q,s})\cong \pol(SU_q(3))\,\Box_{\pol(\T^2)} \pol(\C P_{q,s}^1),
$$
i.e.\ $FM_{q,s}$ is a noncommutative bundle over $\C P_q^2$ with fibre $\C P_{q,s}^1$. All this can be summarised by the following diagram:
$$
\xymatrix{H= \pol(U_q(2))\ar@{=}[d] & & P = \pol(SU_q(3)) \ar@{--}[lld]_{\mbox{\small(principal)}}  \\
C=\pol(U_q(2))\ar@{->>}[d]_{\mbox{\small (coalg.\ r.\ $H$-lin.)}}^\pi &  X = \pol(\C P_{q,s}^1)\ar@{_{(}->}[l] & A = \pol(FM_{q,s})\ar@{_{(}->}[u]  \\
D=\pol(\T^2) & 
& B = \pol(\C P_q^2)\ar@{_{(}->}[u] \ar@{--}[llu]^{\mbox{\small(extension)}}}
$$
\medskip

\begin{rema}
It is worth noting that quantum flag manifolds in general and the flag manifold of $SU_q(3)$ in particular have been quite extensively studied already. For example, aspects of the theory related to: Dirac operators (\cite{k}), function algebras (\cite{NevTus:hom}), Fredholm modules (\cite{vy}), 
complex structures (\cite{OB}, \cite{Mata}), 
have been thoroughly investigated. The novelty of the above example from the present paper is a clear-cut interpretation of the quantum flag manifold 
in question as a noncommutative bundle. Another novelty is appearence of generic Podle\'{s} spheres as fibres. 
\end{rema}

\subsection{The quantum twistor bundle}\label{sec.twistor}

The aim of this section is to show that a quantum twistor bundle,
$$  
\pol(\C P_q^1) \lra \pol(\C P_q^3) \lra \pol(S_q^4),
 $$
also fits into the algebraic framework described in Theorems \ref{thm.main} and \ref{thm.main.DK} above. 
The definition was given  in \cite{MSz}. It takes as its point of departure the noncommutative instanton bundle 
$$  \pol(SU_q(2)) \lra \pol(S_q^7) \lra \pol(S_q^4) $$
constructed in \cite{BCT} and analysed further in \cite{BCDT}.  $C^*$-algebraic aspects are discussed in \cite{MSz}. In the present paper, 
we deal with purely algebraic aspects only. 

Let $\pol(U_q(4))$ be the polynomial algebra of the quantum unitary group, with $q\in (0,1)$. This is a universal $\C$-algebra generated by 
elements $\{t_{ij}\}_{i,j=1}^4$ and $D_q^{-1}$, subject to the following relations: 
$$ \begin{aligned}
& t_{ik}t_{jk} = qt_{jk}t_{ik}, \;\; t_{ki}t_{kj} = qt_{kj}t_{ki}, \;\; i<j,  \\
& t_{il}t_{jk} = t_{jk}t_{il}, \;\; i<j, k<l, \\
& t_{ik}t_{jl} - t_{jl}t_{ik} = (q-q^{-1})t_{jk}t_{il}, \;\;  i<j, k<l, \\
& D_qD_q^{-1} = D_q^{-1}D_q =1. 
\end{aligned} $$
Here $D_q$ is the quantum determinant, defined as 
$$ D_q = \sum_{\sigma\in S_4}(-q)^{I(\sigma)}t_{\sigma(1)1}\cdots t_{\sigma(4)4}, $$
where $I(\sigma)$ denotes the number of inversed pairs and $S_4$ is the symmetric group on four letters. This algebra equipped with the usual 
comultiplication $\Delta_{U_q(4)}$, counit $\varepsilon$, and antipode  $S_{U_q(4)}$,
$$ \begin{aligned}
\Delta_{U_q(4)}(t_{ij}) & = \sum_{k} t_{ik} \otimes t_{kj}, \\
\varepsilon(t_{ij}) & = \delta_{ij}, \\
S_{U_q(4)}(t_{ij}) & = (-q)^{i-j}\sum_{\sigma\in S_3}(-q)^{I(\sigma)}t_{j_{\sigma(1)}i_1}t_{j_{\sigma(2)}i_2}t_{j_{\sigma(3)}i_3}. 
\end{aligned} 
$$
 is a Hopf algebra (see e.g.\ \cite[p. 311--314]{ks}).
Here $\{j_1,j_2,j_3\}=\{1,\ldots,4\}\setminus\{j\}$  and $\{i_1,i_2,i_3\}=\{1,\ldots,4\}\setminus\{i\}$. In fact
$\pol(U_q(4))$ is a Hopf $*$-algebra with the involution
$$ t_{ij}^* = S_{U_q(4)}(t_{ji}) \;\; \text{and} \;\; D_q^*=D_q^{-1}. $$

The $*$-subalgebra of $\pol(U_q(4))$ generated by $\{z_i=t_{4i} \mid i=1,\ldots,4\}$ plays the role of a polynomial algebra of the quantum $7$-sphere. 
In fact, $\pol(S_q^7)$ is the universal $*$-algebra for the following relations:
$$ \begin{aligned}
& z_iz_j = qz_jz_i, \;\; i<j, \;\; \text{and} \;\; z_j^*z_i = qz_iz_j^*, \;\; i\neq j, \\
& z_k^*z_k = z_kz_k^* + (1-q^2)\sum_{j<k}z_jz_j^*, \;\; \text{and} \;\; \sum_{k=1}^4 z_kz_k^* = 1. 
\end{aligned} $$
The enveloping $C^*$-algebra $C(S_q^7)$ coincides with the $C^*$-algebra of continuous functions on the Vaksman-Soibelman quantum 7-sphere 
\cite{vs} and in particular it is a graph $C^*$-algebra \cite{hs}. We note that the generator $z_i$ from \cite{BCT}, \cite{BCDT} (and from the present paper) 
corresponds to $z_{5-i}$ from \cite{hs}.  

Now, let $M$ be the right ideal of  $\pol(U_q(4))$ generated by 
$$ \{ t_{13}, t_{31}, t_{14}, t_{41}, t_{24}, t_{42}, t_{23}, t_{32}, t_{11}-t_{44}, t_{22}-t_{33}, t_{12}+t_{43}, t_{21}+t_{34}, 
t_{11}t_{22}-qt_{12}t_{21}-1\}. $$
Let $\pi:\pol(U_q(4)) \rightarrow \pol(U_q(4))/M$ be the corresponding quotient map. If one identifies $\pi(t_{ij})$, $i,j=1,2$, with the elements 
of the fundamental matrix of $SU_q(2)$, 
$$ \left(\hspace{-1mm} \begin{array}{cc} \alpha & -q\gamma^* \\ \gamma & \alpha^* \end{array} \hspace{-1mm}\right), $$
then $\pi$ becomes a right coalgebra map and a right $\pol(U_q(4))$-module map, in the sense that 
$\pi(a)=\pi(b)$ implies $\pi(ac)=\pi(bc)$, for all $a,b,c\in\pol(U_q(4))$. In what follows, we denote this map  by $\pi_{SU_q(2)}$.  Clearly, 
$$ 
\varrho_{U_q(4)} = (\id\otimes\pi_{SU_q(2)})\circ \Delta_{U_q(4)} 
$$ 
puts on $\pol(U_q(4))$ a right $\pi_{SU_q(2)}$-comodule structure. Its restriction to $\pol(S_q^7)$ 
yields a right $\pi_{SU_q(2)}$-comodule structure on $\pol(S_q^7)$ via 
$$ \varrho_{S_q^7}:\pol(S_q^7) \longrightarrow \pol(S_q^7) \otimes \pol(SU_q(2)). $$
The extension $\pol(S_q^7)\subseteq\pol(U_q^4)$ is copointed, since $\varrho_{S_q^7}(1)=1\otimes 1$. We have that $\pol(S_q^7)^{\mathrm{co}\,SU_q(2)}=
\pol(S_q^7)_1^{\mathrm{co}\,SU_q(2)}$ is a $*$-subalgebra of $\pol(S_q^7)$, denoted by $\pol(S_q^4)$. This $*$-algebra is generated by 
elements 
$$ a=z_1z_4^*-z_2z_3^*, \;\; b=z_1z_3+q^{-1}z_2z_4, \;\; R=z_1z_1^*+z_2z_2^*, $$
satisfying the following  relations
$$ \begin{aligned}
& Ra = q^{-1}aR, \;\; Rb = q^2bR, \;\; ab = q^3ba, \;\; ab^* = q^{-1}b^*a, \\
& aa^* - q^2a^*a = (1-q^2)R^2, \\
& b^*b - q^4bb^* = (1-q^2)R, \\
& aa^* + q^2bb^* = R(1-q^2R). 
\end{aligned} $$ 
$\pol(S_q^4)$ is thought of as the polynomial algebra of a quantum 4-sphere. Its enveloping $C^*$-algebra is isomorphic to the minimal unitization 
of the compacts. 

It is shown in \cite{BCDT} that the canonical Galois map 
$$ \can:\pol(S_q^7)\otimes_{\pol(S_q^4)} \pol(S_q^7) \longrightarrow \pol(S_q^7) \otimes \pol(SU_q(2)) $$ 
is bijective, and so is the entwining map 
$$ \psi:\pol(SU_q(2))\otimes \pol(S_q^7) \longrightarrow \pol(S_q^7) \otimes \pol(SU_q(2)).  $$ 
Furthermore, since the Galois map is bijective and coalgebra $SU_q(2)$ is coseparable (i.e. it is equipped with a Haar measure), it follows from 
\cite[Theorem 4.6]{Brz:comod} that $\pol(S_q^7)$ is $SU_q(2)$-equivariantly projective as a left $\pol(S_q^4)$-module. 

Now, let $u$ be the standard unitary generator of $\pol(U(1))$, and let $\pi_{U(1)}:\pol(SU_q(2)) \longrightarrow \pol(U(1))$
be the Hopf $*$-algebra surjection  such that $\pi_{U(1)}(\alpha)=u$ and $\pi_{U(1)}(\gamma)=0$. 
Then the map 
$$ \bar{\varrho}_{S_q^7} : \pol(S_q^7) \longrightarrow \pol(S_q^7) \otimes \pol(U(1)), \;\;\;\;  
 \bar{\varrho}_{S_q^7} = (\id \otimes \pi_{U(1)}) \circ \varrho_{S_q^7}, $$
yields a right $U(1)$-comodule structure on $\pol(S_q^7)$. We claim that in fact $\bar{\varrho}_{S_q^7}$ is a $*$-homomorphism. Indeed, let 
$\mu:\pol(U_q(4))\longrightarrow\pol(U(1))$ be a $*$-homomorphism such that 
$$ \mu(t_{ij}) = \left\{ \hspace{-2mm}\begin{array}{lll} u & \text{if} & i=j\in\{1,4\} \\ u^* & \text{if} & i=j\in\{2,3\} \\ 0 & \text{if} 
& i\neq j \end{array} \right. $$
Let $E$ be the subalgebra of $\pol(U_q(4))$ generated by $t_{ij}$, $i,j=1,2$. Then, for each $x\in\pol(U_q^4)$ there are $y\in E$ and $m\in M$ 
such that $x=y+m$. Since $M\subseteq\ker(\mu)$ and maps $\pi_{u(1)}\circ \pi_{SU_q(2)}$ and $\mu$ coincide on $E$, we have 
$$
\pi_{u(1)}\circ \pi_{SU_q(2)}(x) = \pi_{u(1)}\circ \pi_{SU_q(2)}(y) = \mu(y) = \mu(x).
$$
 Thus $\pi_{u(1)}\circ \pi_{SU_q(2)}$ is a $*$-homomorphism and, 
consequently, so is $\bar{\varrho}_{S_q^7}$. This in turn implies that $\pol(S_q^7)_1^{\mathrm{co}\,\pol(U(1))} = \pol(S_q^7)^{\mathrm{co}\,\pol(U(1))}$ is a $*$-subalgebra 
of $\pol(S_q^7)$ containing $\pol(S_q^4)$. 

According to the framework presented in Section~\ref{sec.results}, the fibre of the twistor bundle is defined as $\pol(SU_q(2))_1^{\mathrm{co}\,\pol(U(1))}$. It coincides with the $*$-subalgebra 
$\pol(SU_q(2))^{\mathrm{co}\,\pol(U(1))}$ of $\pol(SU_q(2))$, and it is simply the polynomial algebra $\pol(\C P_q^1)$ of the quantum complex projective 1-space 
(the standard Podle\'{s} sphere). 

In view of the above discussion, we see that all the assumptions appearing in Theorems \ref{thm.main} and \ref{thm.main.DK} are fulfilled. The situation described in this section can thus be summarised in the following diagram
$$
\xymatrix{H= \pol(U_q(4))\ar@{->>}[d]_{\mbox{\small(coalg.\ r.\ $H$-lin.)}}^{\pi_{SU_q(2)}} & & P = \pol(S_q^7)\ar@{_{(}->}[ll]_{\mbox{\small(4-th row)}} \ar@{--}[lld]_{\mbox{\small(principal)}}  \\
C=\pol(SU_q(2))\ar@{->>}[d]_{\mbox{\small (Hopf alg.\ map)}}^{\pi_{U(1)}} &  X = \pol(\C P_q^1)\ar@{_{(}->}[l] & A = \pol(\C P_q^3)\ar@{_{(}->}[u]  \\
D=\pol(U(1)) & 
& B = \pol(S_q^4)\ar@{_{(}->}[u] \ar@{--}[llu]^{\mbox{\small(extension)}}}
$$

\begin{rema}
In the example above, in order to construct the twistor bundle we took as point of departure the quantum instanton bundle defined and investigated in 
\cite{BCT} and \cite{BCDT}. In this way we could illustrate the full power of our Theorem  \ref{thm.main.DK}. However, one should note that quite 
different constructions of quantum instanton bundles exist in the literature, for example those from \cite{lpr} and \cite{bm}. 
\end{rema}

\section{The framework: proofs}\label{sec.proofs}
In this section we gather all technical data and definitions and present the proofs of the main results. Throughout this section $C$ generally denotes a coalgebra with comultiplication $\Delta: C\lra C\otimes C$ and counit $\varepsilon$, and $P$ is an algebra with multiplication $\mu: P\otimes P\lra P$ and identity map $\eta: \k \lra P$; the identity element is denoted by $1$. Often $P$ is also a right $C$-comodule with coaction $\varrho: P\lra P\otimes C$. We use Sweedler's notation for comultiplication and coaction,
$$
\Delta(c) = \sum c\sw 1\otimes c\sw 2, \qquad \varrho(p) = \sum p\sw 0\otimes p\sw 1, \qquad \mbox{for all $c\in C$, $p\in P$}.
$$
If $P$ is a right $C$-comodule and $X$ is a left $C$-comodule, then $P\square_C X$ denotes the cotensor product; see \eqref{coten.pcx}.

An {\em entwining structure} is a triple $(P,C,\psi)$, where $P$ is an algebra $C$ is a coalgebra and $\psi: P\otimes C\lra C\otimes P$ is a linear map that satisfies the following conditions \cite{BrzMaj:coa}:
\begin{subequations}\label{entw}
\begin{equation}\label{ent.al}
\psi \circ (\id_C \otimes \mu) = (\mu \otimes \id_C)\circ (\id_P\otimes \psi)\circ (\psi \otimes \id_P), \qquad \psi\circ (\id_C\otimes \eta) = \eta \otimes\id_C.
\end{equation}
\begin{equation}\label{ent.co}
(\id_P \otimes \Delta)\circ \psi  =(\psi \otimes \id_C) \circ (\id_C\otimes \psi)\circ (\Delta \otimes \id_P) , \qquad (\id_P\otimes \eps) \circ \psi  = \eps \otimes\id_P,
\end{equation}
\end{subequations}
To denote the action of an entwining map $\psi$ on elements of $P$ and $C$ we use the notation
\begin{equation}\label{alpha}
\psi(c\ot p) = \sum_\alpha p_\alpha\ot c^\alpha.
\end{equation}
In terms of this notation the conditions \eqref{entw} come out as, for all $p,p'\in P$ and $c\in C$,
$$
\begin{aligned}
& \sum_\alpha (pp')_\alpha \ot c ^\alpha = \sum_{\alpha,\beta} p_\alpha p'_\beta \ot c^{\alpha\beta}, \qquad \sum_\alpha 1_\alpha \ot c^\alpha = 1\ot c,\\
&\sum_\alpha p_\alpha \ot c^\alpha\sw 1\ot c^\alpha\sw 2 = \sum_{\alpha,\beta} p_{\alpha\beta} \ot c^\beta\sw 1\ot c^\alpha\sw 2,  \qquad \sum_\alpha p_\alpha\eps(c^\alpha) = p\eps(c).
\end{aligned}
$$
If the entwining map $\psi$ is bijective, then the action of $\psi^{-1}$ on elements of $P$ and $C$  is denoted by
\begin{equation}\label{inv.alpha}
\psi^{-1} (p\ot c) = \sum_A c_A\ot p^A.
\end{equation}
Equations \eqref{entw} yield the following identities for the inverse of an entwining map:
\begin{subequations}\label{entw.inv}
\begin{equation}\label{entw.inv.al}
\sum_A  c _A \ot (pq)^A= \sum_{A,B} c_{AB}\ot p^B q^A, \qquad \sum_A c_A \ot 1^A = c\ot 1,
\end{equation}
\begin{equation}\label{entw.inv.co}
\sum_A c_A\sw 1\ot c_A\sw 2\ot p^A  = \sum_{A,B}  c_A\sw 1\ot c_B\sw 2\ot p^{AB},  \qquad \sum_A p^A\eps(c_A) = p\eps(c),
\end{equation} 
\end{subequations}
for all $p,q\in P$ and $c\in C$. Furthermore, the combinations of \eqref{alpha} and \eqref{inv.alpha} yield, for all $c\in C$ and $p\in P$,
\begin{equation}\label{inv}
\sum_{\alpha,A} c^\alpha{}_A \ot p_\alpha{}^A =c\ot p, \qquad \sum_{\alpha,A} p^A{}_\alpha \ot c_A{}^\alpha = p\ot c.
\end{equation}
The main examples of an entwining map are the canonical entwining map associated to a coalgebra-Galois extension \eqref{can.entw} and the Doi-Koppinen entwining in Remark~\ref{rem.DK}.

Let $(P,C,\psi)$ be an entwining structure. An {\em entwined module} is a right $P$-module and a right $C$-comodule $M$ such that, for all $m\in M$ and $p\in P$,
$$
\sum (m\cdot p)\sw 0 \ot (m\cdot p)\sw 1 =  \sum_\alpha m\sw 0\cdot p_\alpha \ot m\sw 1^\alpha.
$$
The canonical entwining structure associated to a coalgebra-Galois $C$-extension $P$ \eqref{can.entw}  is the unique one, for which $P$ is an entwined module with the $P$-action given by multiplication and the $C$-coaction $\varrho: P\to P\ot C$. In general, if $(P,C,\psi)$ is an entwining structure and $P$ is an entwined module by multiplication and such that $\varrho(1) = 1\ot e$, for a (necessarily) group-like element $e\in C$, then $\varrho(p) = \psi(e\ot p)$, for all $p\in P$, and 
$
P_e^\coC= P^\coC$ (see Definition~\ref{def.coal.Gal} and equation \eqref{e-coinv}), so, in particular, the $e$-coinvariants form a subalgebra of $B$.

We make the following standing assumptions. $C$ and $D$ are coalgebras,  and $\pi: C\lra D$ is a coalgebra map. We consider $C$ as a right $D$-comodule by $(\id \ot \pi)\circ \Delta_C$. We fix a group-like element $e\in C$, set $\bar e = \pi(e)\in D$ and define $X = C_{\bar e}^{\coD}$ (see \eqref{x}). Furthermore, $P$ is a right $C$-comodule with coaction $\varrho: P\to P\ot C$. We view $P$ as a $D$-comodule by $\bar\varrho = (\id\ot \pi)\circ \varrho$ and set $A = P_{\bar e}^{\coD}$ (see \eqref{coinv.be}) and $B = P^\coC$ (see Definition~\ref{def.coal.Gal}). 

\begin{lemm}\label{lem.apx}~
\begin{zlist}
\item $\Delta(X)\subseteq C\ot X$, and consequently $X$ is a left $C$-comodule.
\item $A\cong P\,\Box_C X$.
\end{zlist}
\end{lemm}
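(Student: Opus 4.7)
My plan is to deduce both parts from a single technical device: the defining coinvariance equation of $X$ propagates from one variable to two via coassociativity, and then flatness of the tensor product over the field $\k$ lets me conclude membership in $C\ot X$ or $P\ot X$.

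For (1), I would introduce the map $g:C\to C\ot D$ given by $g(c)=(\id\ot\pi)\circ\Delta(c)-c\ot\bar e$, so that $X=\ker g$. For $x\in X$, applying $\Delta\ot\id$ to the identity $(\id\ot\pi)\Delta(x)=x\ot\bar e$ and rewriting via coassociativity $(\Delta\ot\id)\Delta=(\id\ot\Delta)\Delta$ gives exactly $(\id_C\ot g)\Delta(x)=0$. Since $\k$ is a field, the functor $C\ot -$ is exact, so $\ker(\id_C\ot g)=C\ot\ker g=C\ot X$, and hence $\Delta(x)\in C\ot X$. This makes $X$ a left $C$-comodule with coaction $\Delta|_X$.

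For (2), I would show that $\varrho$ itself restricts to the asserted isomorphism. An entirely parallel computation, using the right coaction identity $(\id\ot\Delta)\varrho=(\varrho\ot\id)\varrho$ in place of coassociativity of $\Delta$, yields $(\id_P\ot g)\varrho(a)=0$ for every $a\in A$, hence $\varrho(a)\in P\ot X$; and coassociativity of $\varrho$ itself is precisely the cotensor condition, so in fact $\varrho(A)\subseteq P\,\Box_C X$. The inverse should be $\phi:=\id\ot\eps$: applying $\id\ot\id\ot\eps$ to the cotensor identity $\sum\varrho(p_i)\ot x_i=\sum p_i\ot\Delta(x_i)$ gives $\varrho\circ\phi=\id$ on $P\,\Box_C X$, and $\phi\circ\varrho=\id$ on $A$ is the counit axiom for $\varrho$. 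Left $B$-linearity of $\varrho|_A$ is immediate from the defining property of $B=P^{\coC}$.

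The one step that needs more than bare coassociativity is checking that $\phi$ actually lands in $A$. For this I would record the small observation that every $x\in X$ satisfies $\pi(x)=\eps(x)\bar e$, obtained by applying $\eps\ot\id$ to the defining identity of $X$ and using $(\eps\ot\pi)\Delta=\pi$. With this, $(\id\ot\pi)\varrho(\phi(y))=(\id\ot\pi)(y)=\sum p_i\ot\eps(x_i)\bar e=\phi(y)\ot\bar e$, so $\phi(y)\in A$. This is the only mildly non-routine point; the rest is Sweedler bookkeeping plus exactness of $\ot_\k$, so I do not expect a real obstacle.
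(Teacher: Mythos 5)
Your proposal is correct and follows essentially the same route as the paper's proof: part (1) is the paper's coassociativity computation with the exactness of $\ot_\k$ (which the paper leaves implicit) spelled out via your map $g$, and in part (2) your inverse $\phi=\id\ot\eps$, together with the observation $\pi(x)=\eps(x)\bar e$ for $x\in X$ used to check $\phi$ lands in $A$, is precisely the paper's assignment $\sum_i p^i\ot x^i\mapsto \sum_i p^i\eps(x^i)$ and its verification that $\bar\varrho(a)=a\ot\bar e$.
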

\begin{proof}
For any $x\in X$,
$$
\begin{aligned}
(\id\ot\id\ot \pi)\circ(\id\ot \Delta)\circ \Delta(x) &= (\id\ot\id\ot \pi)\circ (\Delta \ot \id)\circ \Delta(x)\\
& = (\Delta \ot \id)\circ (\id\ot \pi)\circ  \Delta(x) = \Delta(x)\ot \bar e,
\end{aligned}
$$
by the coassociativity and the definition of $X$. Thus $\Delta(x) \in C\ot X$, which proves statement (1).

We will show next that the coaction $\varrho$ restricted to $A$ gives the isomorphism of assertion (2). By the coassociativity, $\varrho(A)\subseteq P\,\Box_C C$. For all $a\in A$, $\sum a\sw 0 \ot \pi(a\sw 1) = a\ot \bar{e}$, and thus applying $\varrho \ot \id$ to this equality we obtain 
$$
\begin{aligned}
\sum a\sw 0\ot a\sw 1 \ot \pi(a\sw 2) = \sum a\sw 0\ot a\sw 1 \ot \bar{e},
\end{aligned}
$$
 which, by the coassociativity again, yields $\varrho(A)\subseteq P\,\Box_C X$. Being a restriction of coaction $\varrho\!\mid_A$ is injective. Now, take any $\sum_i p^i\ot x^i\in P\,\Box_C X$ and set $a= \sum_i p^i\eps(x^i)$. Then
 $$
 \begin{aligned}
 \bar\varrho (a) = \sum_i p^i\sw 0\ot \pi(p^i\sw 1) \eps(x^i) = \sum_i p^i\ot \pi(x^i\sw 1) \eps(x^i\sw 2) = \sum_i p^i\ot x^i\ot \bar{e},
 \end{aligned}
 $$
 by the definitions of the cotensor product and $X$. Hence $a\in A$. Furthermore, again using the definition of the cotensor product we obtain
  $$
 \begin{aligned}
\varrho (a) = \sum_i p^i\sw 0\ot p^i\sw 1 \eps(x^i) = \sum_i p^i\ot x^i\sw 1 \eps(x^i\sw 2) = \sum_i p^i\ot x^i,
 \end{aligned}
 $$
 so $\varrho: A\to P\,\Box_C X$ is onto as well.
\end{proof}

\begin{lemm}\label{lem.proj}
Let $(P,C,\psi)$ be an entwining structure and assume that $P$ is an entwined module by multiplication and that $\varrho(1) = 1\ot e$.  
If $P$ is a $C$-equivariantly projective left $B$-module (see Definition~\ref{def.princ}), then $A$ is a projective left $B$-module with action given by the restriction of the multiplication.
\end{lemm}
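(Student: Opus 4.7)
The plan is to transport the given $C$-equivariant splitting of multiplication for $P$ down to a left $B$-linear splitting of multiplication for $A$, and then exploit the fact that over a field the tensor product $B\otimes_\k A$ is automatically a free left $B$-module, so any direct summand is projective.

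To begin, by the definition of $C$-equivariant projectivity there exists a left $B$-linear, right $C$-colinear map $s:P\to B\otimes P$ with $\mu\circ s=\id_P$, where $B\otimes P$ carries the right $C$-coaction $\id_B\otimes\varrho$ (this is well-defined because $B\subseteq P^\coC$, so multiplication $B\otimes P\to P$ is itself $C$-colinear). I would then establish the key claim that $s$ restricts to a map $s\!\mid_A:A\to B\otimes A$. Indeed, for $a\in A$, using $C$-colinearity of $s$ followed by the definition of $\bar\varrho$ and the $\bar e$-coinvariance of $a$,
$$
(\id_B\otimes\bar\varrho)\circ s(a)=(\id_B\otimes\id_P\otimes\pi)\circ(s\otimes\id_C)\circ\varrho(a)=(s\otimes\id_D)\circ\bar\varrho(a)=s(a)\otimes\bar e.
$$
Since $A$ is the kernel of the $\k$-linear map $f:P\to P\otimes D$, $p\mapsto\bar\varrho(p)-p\otimes\bar e$, and $B\otimes_\k-$ is exact over the field $\k$, we have $B\otimes A=\ker(\id_B\otimes f)$. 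The computation above shows $(\id_B\otimes f)(s(a))=0$, hence $s(a)\in B\otimes A$, as required.

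Next, since $B\cdot A\subseteq A$ (for $b\in B$ and $a\in A$ one has $\bar\varrho(ba)=b\bar\varrho(a)=ba\otimes\bar e$ by strong coinvariance of $B$), the multiplication map descends to $\mu_A:B\otimes A\to A$, and $s\!\mid_A$ is a left $B$-linear splitting of $\mu_A$. This exhibits $A$ as a direct summand of $B\otimes_\k A$ in the category of left $B$-modules. As $A$ is a $\k$-vector space, $B\otimes_\k A$ is a free left $B$-module, so $A$ is projective.

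I do not anticipate a serious obstacle: the substantive content is the restriction argument, and its only subtle point is the interchange of $\id_P\otimes\pi$ with $s\otimes\id_C$, which is automatic. The concluding step relies on working over a field, which is fixed as a standing hypothesis in Section \ref{sec.results}; the argument would need adjustment (e.g., assuming faithful flatness or using a different realization of projectivity) over a more general base ring, but this is not needed here.
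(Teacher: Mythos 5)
Your proposal is correct, and it produces the same splitting map as the paper --- the restriction $\sigma\!\mid_A$ of the equivariant splitting $\sigma\colon P\to B\ot P$ --- but verifies it by a genuinely more direct mechanism. The paper routes the construction through the identification $A\cong P\,\Box_C X$ of Lemma~\ref{lem.apx}, defining the splitting as the composite
$$
A \cong P\,\Box_C X \lra (B\ot P)\,\Box_C X \cong B\ot (P\,\Box_C X) \cong B\ot A,
$$
and then leaves to the reader the check that this composite equals $\sigma\!\mid_A$. You instead prove directly that $\sigma(A)\subseteq B\ot A$: pushing the right $C$-colinearity of $\sigma$ along $\pi$ yields $(\id_B\ot\bar\varrho)(\sigma(a))=\sigma(a)\ot\bar e$ for all $a\in A$, and exactness of $B\ot_\k(-)$ identifies $\ker(\id_B\ot f)$ with $B\ot\ker f=B\ot A$, where $f(p)=\bar\varrho(p)-p\ot\bar e$. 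It is worth noting that the field hypothesis enters both arguments at the same point: your exactness step is precisely what makes the paper's middle isomorphism $(B\ot P)\,\Box_C X\cong B\ot(P\,\Box_C X)$ valid, since the cotensor product is a kernel and $B\ot_\k(-)$ preserves kernels. What your route buys is self-containedness --- no cotensor products, no appeal to Lemma~\ref{lem.apx}, and the coideal $X$ never appears --- whereas the paper's formulation makes visible that the projectivity of $A$ is induced functorially from the associated-bundle decomposition $A\cong P\,\Box_C X$, which is the geometric point of the whole framework. Your auxiliary verifications ($B\cdot A\subseteq A$ via the strong coinvariance defining $B=P^{\coC}$, which also gives the colinearity of the multiplication $B\ot P\to P$; left $B$-linearity of the restricted splitting; freeness of $B\ot_\k A$ as a left $B$-module, so that a $B$-linear direct summand of it is projective) are all correct and match what the paper tacitly uses.
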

\begin{proof}
Since, by the definition of $B$, the coaction $\varrho$ is left $B$-linear and $\bar{\varrho}$ is a compostion of $B$-module maps, for all $b\in B$ and $a\in A$, $\bar{\varrho}(ba) = ba\ot \bar{e}$, i.e.\ $ba\in A$. Let $\sigma: P\lra B\ot P$ be a right $C$-colinear left $B$-linear splitting of the multiplication map. Then a left $B$-module splitting of the multiplication map $B\ot A\to A$ is obtained as the composite
$$
\bar{\sigma}: \xymatrix{A\ar[r]^-{\cong}_-\varrho& P\,\Box_C X \ar[r]^-{\sigma\ot \id} & (B\ot P)\,\Box_C X \ar[r]^-{\cong}& B\ot (P\,\Box_C X)\ar[rr]^-{\cong}_-{\id\ot\id\ot\eps} && B\ot A},
$$
where $X$ is defined in Lemma~\ref{lem.apx}, and the isomorphisms are from there too. One easily checks that $\bar{\sigma} = \sigma\mid_A$, and thus the splitting property is immediate.
\end{proof}

\begin{lemm}\label{lem.pbax}
If $B\subseteq P$ is a principal $C$-Galois extension, then 
the restriction of the  Galois map $\can: P\ot_B P\lra P\ot C$ is an isomorphism
$$
\chi: \xymatrix{P\ot_B A\ar[r]^-{\cong} & P\ot X}
$$
of left $P$-modules.
\end{lemm}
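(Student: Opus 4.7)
The plan is to show that $\can$, which is already bijective as a map $P\ot_B P\to P\ot C$, carries $P\ot_B A$ bijectively onto $P\ot X$, and to do so by combining Lemma~\ref{lem.apx} with the right $C$-colinearity of $\can$.

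First I would check that $\chi$ is well-defined and left $P$-linear. Left $P$-linearity of $\can$ (and hence of $\can^{-1}$) is immediate from the formula $\can(p\ot_B q)=p\varrho(q)$. For the image condition, Lemma~\ref{lem.apx}(1)--(2) gives $\varrho(A)\subseteq P\,\Box_C X\subseteq P\ot X$, so $\can(p\ot_B a)=p\varrho(a)\in P\ot X$ for every $p\in P$ and $a\in A$. Injectivity of $\chi$ is then formal: principality of the extension ensures that $P$ is flat as a right $B$-module (Definition~\ref{def.princ} and the remarks following it), so the inclusion $A\hookrightarrow P$ induces an inclusion $P\ot_B A\hookrightarrow P\ot_B P$, and $\chi$ is the restriction of the bijection $\can$ to this submodule.

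The substantive step is surjectivity, which I would obtain by producing a left $P$-linear inverse. A direct Sweedler computation shows that $\can$ is right $C$-colinear with respect to the coactions $\id\ot_B\varrho$ on $P\ot_B P$ and $\id\ot\Delta$ on $P\ot C$ (this is just the coassociativity of $\varrho$). Applying $\id\ot\id\ot\pi$ to this intertwining identity and inverting $\can$, one obtains
$$(\id\ot_B\bar\varrho)\circ\can^{-1}=(\can^{-1}\ot\id)\circ(\id\ot(\id\ot\pi)\Delta)$$
as maps $P\ot C\to P\ot_B P\ot D$. For $x\in X$ the right-hand side evaluated at $1\ot x$ equals $\can^{-1}(1\ot x)\ot\bar e$, because $(\id\ot\pi)\Delta(x)=x\ot\bar e$.

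To convert this into the statement $\can^{-1}(1\ot x)\in P\ot_B A$, I would use flatness of $P$ over $B$ to commute $P\ot_B(-)$ with the kernel defining $A$. Writing $f:P\to P\ot D$, $f(p)=\bar\varrho(p)-p\ot\bar e$, the definition of $A$ gives the exact sequence $0\to A\to P\xrightarrow{f} P\ot D$, and flatness of $P_B$ yields
$$0\to P\ot_B A\lra P\ot_B P\xrightarrow{\;\id\ot_B f\;}P\ot_B P\ot D,$$
so that $P\ot_B A=\{\,\xi\in P\ot_B P\mid (\id\ot_B\bar\varrho)(\xi)=\xi\ot\bar e\}$. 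The identity above places $\can^{-1}(1\ot x)$ in this kernel, hence in $P\ot_B A$. By left $P$-linearity, $\can^{-1}(p\ot x)=p\cdot\can^{-1}(1\ot x)\in P\ot_B A$ for every $p\in P$ and $x\in X$, giving surjectivity of $\chi$ and simultaneously exhibiting $\can^{-1}\!\mid_{P\ot X}$ as its two-sided inverse.

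The main obstacle I expect is not the colinearity identity itself, which is routine, but the argument that flatness of $P$ over $B$ transports the equalizer defining $A$ through $P\ot_B(-)$; this is where the principality hypothesis is really used, and it is the reason why $\chi^{-1}$ can be recovered entirely from the colinearity of $\can$ rather than from any explicit translation map.
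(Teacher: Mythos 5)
Your proof is correct, and it uses the same two structural inputs as the paper --- the right $C$-colinearity of $\can$ and the flatness of $P$ as a right $B$-module coming from principality --- but packages them differently. The paper's proof exhibits $\chi$ as a composite of four isomorphisms,
$P\ot_B A\cong P\ot_B(P\,\Box_C X)\cong(P\ot_B P)\,\Box_C X\xrightarrow{\can\ot\id}(P\ot C)\,\Box_C X\cong P\ot X$,
reusing Lemma~\ref{lem.apx}(2) wholesale and using flatness to exchange $P\ot_B(-)$ with the $C$-cotensor product; it then still has to check (left implicit as ``one can easily check'') that this composite coincides with the restriction of the Galois map. You instead argue directly on the restriction: Lemma~\ref{lem.apx} gives $\can(P\ot_B A)\subseteq P\ot X$, flatness gives the injection $P\ot_B A\hookrightarrow P\ot_B P$, and for surjectivity you push the $C$-colinearity of $\can^{-1}$ through $\pi$ to get the $D$-colinearity identity, then use flatness to identify $P\ot_B A$ with the equalizer $\{\xi\in P\ot_B P \mid (\id\ot_B\bar\varrho)(\xi)=\xi\ot\bar e\}$ --- i.e.\ you commute flatness past the $D$-coaction equalizer defining $A$ rather than past the $C$-cotensor defining $P\,\Box_C X$, which is the same mechanism applied to a different equalizer. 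What your route buys is that the statement ``$\chi$ is the restriction of $\can$'' and the left $P$-linearity are automatic rather than a final verification; what the paper's route buys is the explicit display of the associated-bundle picture $(P\ot_B P)\,\Box_C X$ and direct reuse of the earlier lemma. The only step you leave unstated is that $f(p)=\bar\varrho(p)-p\ot\bar e$ is left $B$-linear (so that $0\to A\to P\xrightarrow{f}P\ot D$ is a sequence of left $B$-modules to which flatness of $P_B$ applies); this follows immediately from $\varrho$ being left $B$-linear by the definition of $B=P^{\coC}$, as the paper itself notes in the proof of Lemma~\ref{lem.proj}, so it is a routine omission rather than a gap.
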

\begin{proof}
We can consider the following chain of isomorphisms:
$$
\chi: \xymatrix{ P\ot_B A \ar[r]^-{\id\ot \varrho} & P\ot_B (P\,\Box_C X) \ar[r]^-\cong & (P\ot_B P)\,\Box_C X \ar[r]^-{\can\ot \id} &  (P\ot C) \,\Box_C X \ar[r]^-\cong & P\ot X},
$$
where the first isomorphism is from Lemma~\ref{lem.apx} and the second follows by the fact that $P$ is a projective hence flat right $B$-module -- a consequence of the fact that $P$ is a principal $C$-extension (see \cite[Theorem~2.5]{BrzHaj:Che}). The last isomorphism is obtained by applying counit in the middle factor. One can easily check that the resulting isomorphism is the restriction of the Galois map, as claimed.
\end{proof}

\begin{lemm}\label{lem.abar}
Let $B\subseteq P$ be a principal $C$-extension. Then 
\begin{zlist}
\item $P\ot X$ is a left $D$-comodule with coaction $\Lambda$ given by \eqref{mixed.coact}.
\item $\bar{A}\ot_B A\cong {}^\coD(P\ot X)_{\bar e}$,
where 
$$
\bar{A} = {}^\coD P_{\bar{e}} = \{a\in P\; |\; (\pi\ot \id)\circ \lambda (a) = \pi(e)\ot a\}.
$$
\end{zlist}
\end{lemm}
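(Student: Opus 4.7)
The approach is to prove (1) by direct verification of the coaction axioms, and to prove (2) by showing that the Galois isomorphism $\chi$ of Lemma~\ref{lem.pbax} is a map of left $D$-comodules, and then passing to $\bar e$-coinvariants on both sides.

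For part (1), I would first check that $\Lambda$ actually lands in $D\otimes P\otimes X$ rather than merely in $D\otimes P\otimes C$. By Lemma~\ref{lem.apx}(1) we have $\Delta(X)\subseteq C\otimes X$, so in $\sum p\otimes x\sw 1\otimes x\sw 2$ the factor $x\sw 2$ lies in $X$, and neither $\psi^{-1}$ (which acts only on the first two slots) nor the subsequent $\pi$ disturbs it. Coassociativity of $\Lambda$ then follows by combining coassociativity of $\Delta_C$ on $X$, the fact that $\pi$ is a coalgebra map, and the identity \eqref{entw.inv.co} expressing that $\psi^{-1}$ behaves coassociatively with respect to $\Delta_C$. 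The counit axiom follows from $\eps_D\circ\pi=\eps_C$, the second half of \eqref{entw.inv.co}, and counitality of $\Delta_C$.

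For part (2), equip $P\otimes_B A$ with the left $D$-coaction $\bar\lambda\otimes\id_A$, where $\bar\lambda=(\pi\otimes\id)\circ\lambda$ and $\lambda(p)=\psi^{-1}(p\otimes e)$ as in \eqref{left.coact}; this coaction is well defined on the tensor product since $\bar\lambda(b)=\bar e\otimes b$ for all $b\in B$. A short computation using \eqref{entw.inv.al} together with $\psi^{-1}(b\otimes e)=e\otimes b$ for $b\in B$ shows that $\bar A$ is a right $B$-submodule of $P$, and $\bar A\otimes_B A$ is exactly the set of $\bar e$-coinvariants of $P\otimes_B A$: one inclusion is immediate, and the reverse inclusion uses that $A$ is flat as a left $B$-module (being projective by Lemma~\ref{lem.proj}), so $-\otimes_B A$ preserves the equalizer cutting out $\bar A\subseteq P$. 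The crucial step is then to verify that $\chi$ is left $D$-colinear, i.e.
$$
\Lambda\circ\chi = (\id_D\otimes\chi)\circ(\bar\lambda\otimes\id_A).
$$
Writing out $\chi(p\otimes a)=\sum pa\sw 0\otimes a\sw 1$ with $\varrho(a)=\sum a\sw 0\otimes a\sw 1\in P\,\Box_C X$, this reduces to a Sweedler-level identity that uses the (anti-)multiplicative behaviour of $\psi^{-1}$ recorded in \eqref{entw.inv.al} applied to the product $p\cdot a\sw 0$, together with the right $D$-coinvariance $(\id\otimes\pi)\varrho(a)=a\otimes\bar e$ of $a\in A$ to collapse a $\pi$-image to $\bar e$. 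Once colinearity is established, the isomorphism in part~(2) is obtained by restricting $\chi$ to the $\bar e$-coinvariants on each side.

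The main obstacle is exactly this colinearity calculation: one must track, simultaneously, Sweedler indices for $\varrho$ and $\psi^{-1}$ on the right-hand side of $\Lambda\circ\chi$, invoke the anti-multiplicativity of $\psi^{-1}$ to split $\psi^{-1}(p\cdot a\sw 0\otimes\,\cdot\,)$ into a composition of two $\psi^{-1}$'s, and at the critical moment push $\pi$ through an iterated coaction on $a$ so as to exploit the $D$-coinvariance of $a\in A$ and turn the $\pi$-image of the appropriate Sweedler component into $\bar e$. Everything else---the axiom check in (1), the identification of coinvariants via flatness in (2), and the final restriction of $\chi$---is then routine coalgebra bookkeeping.
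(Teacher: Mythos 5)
Your overall architecture is exactly the paper's: part (1) is verified by the same axiom check (codomain in $D\ot P\ot X$ via Lemma~\ref{lem.apx}(1), coassociativity and counitality via the coalgebra map property of $\pi$ and \eqref{entw.inv.co}), and part (2) is obtained by making $P\ot_B A$ a left $D$-comodule via $(\pi\ot\id)\circ(\lambda\ot\id_A)$, proving that the isomorphism $\chi$ of Lemma~\ref{lem.pbax} is left $D$-colinear, and then identifying ${}^\coD(P\ot_B A)_{\bar e}$ with $\bar A\ot_B A$ by writing $\bar e$-coinvariants as $\k\,\Box_D(-)$ and using flatness of $A$ from Lemma~\ref{lem.proj}. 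One misattribution first: well-definedness of the coaction on $P\ot_B A$ does not follow from $\bar\lambda(b)=\bar e\ot b$; what is needed is the right $B$-linearity $\lambda(pb)=\lambda(p)b$ for all $p\in P$, $b\in B$ --- precisely the computation from \eqref{entw.inv.al} and $\psi^{-1}(b\ot e)=e\ot b$ that you cite, but deploy only for the (also needed) claim that $\bar A$ is a right $B$-module. That slip is easily repaired, since the required identities are already in your toolkit.

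The genuine gap is in the step you yourself single out as the main obstacle. The colinearity of $\chi$ does \emph{not} use the $D$-coinvariance $(\id\ot\pi)\circ\varrho(a)=a\ot\bar e$, and the recipe ``push $\pi$ through an iterated coaction on $a$ and collapse its image to $\bar e$'' cannot be executed. Writing $\varrho(a)=\sum a\sw 0\ot a\sw 1$, after coassociativity and the anti-multiplicativity in \eqref{entw.inv.al} one faces
$$
\Lambda\circ\chi(p\ot a)=\sum_{A,B}\pi\bigl(a\sw 1{}_{AB}\bigr)\ot p^B\, a\sw 0{}^A\ot a\sw 2,
$$
where $\pi$ is applied to $\psi^{-1}$-twisted Sweedler legs, so the coinvariance of $a$ has no slot to act on; moreover the target expression $(\id_D\ot\chi)\circ\bar\Lambda(p\ot a)=\sum_B\pi(e_B)\ot p^B a\sw 0\ot a\sw 1$ has $D$-leg $\pi(e_B)$, not $\bar e$, so nothing should collapse to $\bar e$ at all. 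The mechanism that actually closes the computation is the inverse relation between $\psi$ and $\psi^{-1}$: since $\varrho=\psi(e\ot -)$ with $e$ group-like, equation \eqref{inv} yields $\sum_A p\sw 1{}_A\ot p\sw 0{}^A=e\ot p$ for \emph{all} $p\in P$, and applying this to the inner index $A$ above collapses $\sum_A a\sw 1{}_A\ot a\sw 0{}^A\ot a\sw 2$ to $e\ot a\sw 0\ot a\sw 1$; this is exactly the paper's calculation in its $\alpha$-index notation (via \eqref{inv} and group-likeness of $e$). In particular, colinearity holds for any element whose $\varrho$-image lies in $P\,\Box_C X$; membership $a\in A$ enters only to guarantee $a\sw 1\in X$ so that $\Lambda$ is defined. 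As written, a search for a place to apply $\bar\varrho(a)=a\ot\bar e$ would stall; replace that ingredient by \eqref{inv} and the rest of your argument goes through as in the paper.
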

\begin{proof}
In terms of the explicit notation for the inverse of an entwining map \eqref{inv.alpha} the action of $\Lambda$ on $p\ot x\in P\ot X$ comes out as
$$
\Lambda(p\ot x) = \sum_A \pi(x\sw 1_A)\ot p^A \ot x\sw 2.
$$
Since $\Delta(X)\subseteq C\ot X$, the codomain of $\Lambda$ is $D\ot P\ot X$ as required. The map $\Lambda$ is counital, since $\pi$ is a coalgebra morphism and $\psi^{-1}$  preserves the counit in the sense of the second of equations \eqref{entw.inv.co}.  To check the coassociativity, take any $p\ot x\in P\ot X$, and compute
$$
\begin{aligned}
(\Delta_D\ot \id_{P\ot X})\circ \Lambda (p\ot x) &= \sum_A \Delta_D\left(\pi(x\sw 1_A)\right)\ot p^A \ot x\sw 2\\
&= \sum_A \pi(x\sw 1_A\sw 1)\ot \pi(x\sw 1_A\sw 2)\ot p^A \ot x\sw 2\\
&= \sum_{A,B} \pi(x\sw 1_A)\ot \pi(x\sw 2_B)\ot p^{AB} \ot x\sw 3,
\end{aligned}
$$
by the coalgebra homomorphism property of $\pi$ and the  the first of equations \eqref{entw.inv.co}.
On the other hand
$$
\begin{aligned}
(\id\ot \Lambda)\circ \Lambda (p\ot x) &=  \sum_A \pi(x\sw 1_A)\ot \Lambda(p^A \ot x\sw 2)\\
& = \sum_{A,B} \pi(x\sw 1_A)\ot \pi(x\sw 2_B)\ot p^{AB} \ot x\sw 3,
\end{aligned}
$$
 so that the map $\Lambda$ is coassociative as required. This proves assertion (1).

Note that the left $C$-coaction $\lambda: P\to C\ot P$ given by \eqref{left.coact} is right $B$-linear. Indeed, the contents of $B$ is fully characterised by $\psi(e\ot b) = b\ot e$, and hence equivalently by $e\ot b = \psi^{-1}(b\ot e)$. In particular, for all $p\in P$ and $b\in B$, 
$$
\begin{aligned}
\lambda(pb) &= \psi^{-1}(pb\ot e) = \sum_A e_A\ot (pb)^A\\
& = \sum_{A,B} e_{BA}\ot p^Ab^B = \sum_A e_A\ot p^Ab = \lambda(p)b,
\end{aligned}
$$
where the first of equations \eqref{entw.inv.al} has been used. Therefore, $P\ot_B A$ can be made into a left $D$-module by the coaction
$$
\begin{aligned}
\bar\Lambda = (\pi\ot\id_{P\ot_B A})\circ(\lambda\ot \id_A): P\ot_B A& \lra D\ot P\ot_B A,\\
  p\ot a&\lto \sum_A \pi(e_A)\ot p^A\ot a.
\end{aligned}
$$
We note next that the isomorphism $\chi$ of Lemma~\ref{lem.pbax} is a left $D$-comodule map. Indeed, for all $p\ot a\in P\ot_B A$, on one hand
$$
\begin{aligned}
(\id_D\ot \chi)\circ \bar\Lambda (p\ot a) &= \sum_A \pi(e_A)\ot p^Aa\sw 0\ot a\sw 1 = \sum_{A,\alpha} \pi(e_A)\ot p^Aa_\alpha\ot e^\alpha.
\end{aligned}
$$
On the other hand, using the definition of an entwining map and the resulting properties of its inverse \eqref{entw.inv}
$$
\begin{aligned}
\Lambda\circ\chi(p\ot a) &=\Lambda\left(\sum_\alpha pa_\alpha\ot e^\alpha\right)\\
&=\sum_{A,\alpha} \pi(e^\alpha\sw 1_A)\ot (pa_\alpha)^A \ot e^\alpha\sw 2\\
&=\sum_{A,\alpha,\beta} \pi(e^{\beta}{}_A)\ot (pa_{ \alpha\beta})^A \ot e^\alpha
\\
&=\sum_{A,B,\alpha,\beta} \pi(e^{\beta}{}_{BA})\ot p^Aa_{ \alpha\beta}{}^B \ot e^\alpha = \sum_{A,\alpha} \pi(e_A)\ot p^Aa_\alpha\ot e^\alpha.
\end{aligned}
$$
In addition to the axioms of an entwining map and \eqref{entw.inv}, the fact that $e$ is a group-like element and the first of equations \eqref{inv} have been used. 

Since $\chi$ is an isomorphism of left $D$-comodules and taking the coinvariants is a functor from the category of comodules to the category of vector spaces we obtain the isomorphism 
$$
{}^\coD (P\ot_B A)_{\bar{e}} \cong {}^\coD (P\ot X)_{\bar{e}}.
$$
Note that, for all left $D$-comodules $V$, the $\bar e$-coinvariants can be interpreted as the cotensor product
$$
{}^\coD V_{\bar{e}} \cong \k\,\Box_D V,
$$
where $\k$ is the right $D$-comodule by the coaction $1\mapsto 1\ot \bar{e}$. Since $A$ is a projective and hence flat  left $B$-module by Lemma~\ref{lem.proj}, 
$$
{}^\coD (P\ot_B A)_{\bar{e}} \cong \k\,\Box_D (P\ot_B A) \cong (\k\,\Box_D P)\ot_B A \cong {}^\coD P_{\bar{e}} \ot_B A = \bar{A}\ot_B A.
$$
This proves the second statement of the lemma.
\end{proof}

Put together, Lemmas~\ref{lem.apx}--\ref{lem.abar} yield all the assertions of Theorem~\ref{thm.main}.

We now move on to proving Theorem~\ref{thm.main.DK}. 

\begin{lemm}\label{lem.r}
Under the assumptions of Theorem~\ref{thm.main.DK},
$$
\varrho = (\id\ot r)\circ \delta,
$$
for  a right $H$-linear coalgebra map $r: H\lra C$.
\end{lemm}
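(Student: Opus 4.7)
The natural candidate is to define $r: H \to C$ by
\[
r(h) := e \cdot h,
\]
where $\cdot$ is the right $H$-action on $C$ from hypothesis (b). The plan is then to (i) identify $\varrho = (\id \otimes r)\circ \delta$, (ii) show $r$ is right $H$-linear, and (iii) show $r$ is a coalgebra morphism.

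For (i), recall that by Definition~\ref{def.princ} the extension $B \subseteq P$ is $e$-copointed, so by \eqref{coact.entw} we have $\varrho(p) = \psi(e \otimes p)$ for all $p \in P$. Hypothesis (d) of Theorem~\ref{thm.main.DK}, as unpacked in Remark~\ref{rem.DK}, tells us that $\psi$ is the Doi--Koppinen entwining \eqref{DK.entw}. Writing $\delta(p) = \sum_i p_i \otimes h_i$ one obtains
\[
\varrho(p) \;=\; \psi(e \otimes p) \;=\; \sum_i p_i \otimes e \cdot h_i \;=\; (\id \otimes r)\circ \delta(p),
\]
which is exactly the factorisation claimed.

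For (ii), right $H$-linearity is the associativity of the $H$-action: $r(hh') = e\cdot(hh') = (e\cdot h)\cdot h' = r(h)\cdot h'$. For (iii), I would invoke hypothesis (b), which states that $C$ is a right $H$-module coalgebra in the sense of \eqref{mod.coal}. Combined with the fact that $e$ is group-like ($\Delta_C(e) = e \otimes e$, $\eps_C(e) = 1$), this gives
\[
\Delta_C(r(h)) \;=\; \Delta_C(e\cdot h) \;=\; \Delta_C(e)\cdot \Delta_H(h) \;=\; \sum (e\cdot h\sw 1) \otimes (e\cdot h\sw 2) \;=\; \sum r(h\sw 1) \otimes r(h\sw 2),
\]
and similarly $\eps_C(r(h)) = \eps_C(e)\eps_H(h) = \eps_H(h)$, so $r$ is a coalgebra map.

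There is no real obstacle here: the statement is essentially a translation of the Doi--Koppinen form of $\psi$ (from hypothesis (d), via Remark~\ref{rem.DK}) together with the $e$-copointedness built into Definition~\ref{def.princ}. The only point that requires any care is to keep track of the distinction between the $H$-action on $C$ and the $H$-coaction on $P$, and to use that $e$ is group-like precisely at the step where $r$ is shown to respect comultiplication.
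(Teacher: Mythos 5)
Your proposal is correct and follows essentially the same route as the paper: the paper also defines $r(h)=e\cdot h$, verifies the coalgebra-map property from \eqref{mod.coal} together with $e$ being group-like, and obtains the factorisation $\varrho(p)=\psi(e\otimes p)=(1\otimes e)\cdot\delta(p)=(\id\otimes r)\circ\delta(p)$ from the Doi--Koppinen form of $\psi$ in Remark~\ref{rem.DK}. Your explicit check of right $H$-linearity via associativity of the action is a detail the paper leaves implicit, but otherwise the two arguments coincide step for step.
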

\begin{proof}
Consider the following right $H$-linear map:
$$
r: H\lra C, \qquad h\mapsto e\cdot h.
$$
Since $C$ is a right $H$-module coalgebra, i.e.\ equations \eqref{mod.coal} are satisfied and $e$ is a group-like element we can compute, for all $h\in H$,
$$
\eps(r(h)) = \eps(e\cdot h) = \eps(e)\eps(h) = \eps(h),
$$
and
$$
\begin{aligned}
\Delta_C(r(h)) &= \sum (e\cdot h)\sw 1\ot (e\cdot h)\sw 2 = \sum e\cdot h\sw 1\ot e\cdot h\sw 2 = (r\ot r)\circ \Delta_H(h).
\end{aligned}
$$
Therefore, $r$ is a coalgebra morphism as required. As explained in Remark~\ref{rem.DK}, the entwining is of the Doi-Koppinen type (see \eqref{DK.entw}). Hence,
$$
\varrho(p) = \psi(e\ot p) = (1\ot e)\cdot \delta(p) = (\id\ot r)\circ \delta(p),
$$ 
as stated.
\end{proof}

In view of Lemma~\ref{lem.r}, in the setup of Theorem~\ref{thm.main.DK}, the coaction $\varrho: P\lra P\ot C$ can be seen as secondary to the coaction $\delta: P\lra P\ot H$. In what follows we will use $\sum p\sw 0\ot p\sw 1$ to denote $\delta(p)$. 

\begin{proof}(Theorem~\ref{thm.main.DK}) Since $\pi:C\lra D$ is a right $H$-linear map, for all $h\in H$,
$$
\pi\circ r (h) = \pi(e\cdot h) = \bar{e}\cdot h,
$$
and hence, for all $p\in P$,  
$$
\bar\varrho(p) = (\id\ot \pi)\circ (\id \ot r)\cdot \delta (p) = \sum p\sw 0 \ot \bar{e}\cdot p\sw 1.
$$
Therefore, for all $a,a'\in A$,
$$
\begin{aligned}
\bar\varrho(aa') &= \sum (aa')\sw 0 \ot \bar{e}\cdot (aa')\sw 1 = \sum a\sw 0 a'\sw 0 \ot \bar{e}\cdot (a\sw 1a'\sw 1)\\
&= \sum a\sw 0 a'\sw 0 \ot (\bar{e}\cdot a\sw 1)\cdot a'\sw 1 = \sum a a'\sw 0 \ot \bar{e} \cdot a'\sw 1 = aa'\ot \bar{e},
\end{aligned}
$$
since $\delta$ is an algebra map (that is, $P$ is a right $H$-comodule algebra). This proves the first statement of Theorem~\ref{thm.main.DK}, i.e.\ that $A$ is a subalgebra of $P$ (obviously $B\subseteq A$). 

Since the hypotheses of Theorem~\ref{thm.main} are satisfied, we know that $\bar{A}\ot_B A\cong {}^\coD(P\ot X)_{\bar e}$. First we calculate the form of $\bar{A}$. By the definition of $\bar{A}$ and in view of the form of the inverse of the Doi-Koppinen entwining in Remark~\ref{rem.DK},  $a\in \bar{A}$ if and only if 
$$
\bar{e}\ot a = \sum \bar{e}\cdot S^{-1}(a\sw 1)\ot a\sw 0.
$$
Applying $\id_D\ot \delta$ and acting with the right-most term (in $H$) on the left-most term (in $D$) one finds that
$$
\sum \bar{e}\cdot a\sw 1 \ot a\sw 0 = \sum \bar{e}\cdot (S^{-1}(a\sw 2)a\sw 1) \ot a\sw 0 = \bar{e} \ot a.
$$
Therefore, $\varrho(a) = a\ot \bar{e}$, i.e.\ $a\in A$. Similarly, if $a\in A$, then $\sum a\sw 0 \ot \bar{e}\cdot a\sw 1 = a\ot \bar{e}$, hence 
$$
\sum a\sw 0 \ot S^{-1}(a\sw 1) \ot \bar{e}\cdot a\sw 2 = \sum a\sw 0 \ot S^{-1}(a\sw 1) \ot \bar{e},
$$
which implies that
$$
\sum a\sw 0 \ot \bar{e}\cdot (a\sw 2S^{-1}(a\sw 1)) = \sum a\sw 0 \ot  \bar{e}\cdot S^{-1}(a\sw 1),
$$
i.e.\
$\bar{e}\ot a = \lambda (a)$. Therefore $a\in \bar{A}$. This shows that $\bar{A} = A$. 

Finally, using the explicit form of $\psi^{-1}$ we can observe that 
$\sum_ip^i\ot x^i \in {}^\coD(P\ot X)_{\bar e}$  if and only if
$$
\begin{aligned}
\sum_i \bar{e} \ot p^i\ot x^i  &= \sum_{i,A}\pi({x^i\sw 1}_A)\ot p^{iA}\ot x^i\sw 2\\
&= \sum_i \pi(x^i\sw 1\cdot S^{-1}(p^i\sw 1))\ot p^i\sw 0\ot x^i\sw 2\\
 &= \sum_i \pi(x^i\sw 1)\cdot S^{-1}(p^i\sw 1)\ot p^i\sw 0\ot x^i\sw 2.
\end{aligned}
$$
Therefore, 
$$
\sum_i \bar{e}\cdot p^i\sw 1\ot p^i\sw 0\ot x^i = \sum_i \pi(x^i\sw 1)\cdot (S^{-1}(p^i\sw 2)p^i\sw 1)\ot p^i\sw 0\ot x^i\sw 2,
$$
which, in view of Lemma~\ref{lem.r} and the antipode axioms, implies that
\begin{equation}\label{pdx}
\sum_i \bar{\varrho}(p^i)\ot x^i = \sum_i p^i \ot \pi(x^i\sw 1)\ot x^i\sw 2,
\end{equation}
i.e.\ $\sum_ip^i\ot x^i \in P\,\Box_D X$. Conversely, if equation \eqref{pdx} holds, then
$$
\sum_i (\bar{e}\cdot p^i\sw 2)\cdot S^{-1}(p^i\sw 1)\ot p^i\sw 0 \ot x^i = \sum_i \pi(x^i\sw 1)\cdot S^{-1}(p^i\sw 1)\ot p^i\sw 0 \ot x^i\sw 2,
$$
i.e.
$$
\sum_i \bar{e}\ot p^i \ot x^i = \sum_i \pi(x^i\sw 1\cdot S^{-1}(p^i\sw 1))\ot p^i\sw 0 \ot x^i\sw 2 = \Lambda(\sum_ip^i\ot x^i).
$$
Therefore, $\sum_ip^i\ot x^i\in  {}^\coD(P\ot X)_{\bar e}$ and we conclude that 
$$
A\ot_B A\cong P\,\Box_D X.
$$
Since the isomorphism in Theorem~\ref{thm.main} is given by the restriction of the canonical Galois map, so is the one above, as required.
\end{proof}

\end{document}